\def\sA{{\mathfrak A}}   \def\sB{{\mathfrak B}}   
\def\sD{{\mathfrak D}}      
   \def\sH{{\mathfrak H}}   
   \def\sK{{\mathfrak K}}   \def\sL{{\mathfrak L}}
   \def\sN{{\mathfrak N}}
      \def\dC{{\mathbb C}}
   \def\dN{{\mathbb N}}
\def\bB{{\mathbf B}}
\def\bL{{\mathbf L}}
\def\bm\chi{\mbox{\boldmath$\chi$}}
\def\half{{\frac{1}{2}}}
\def\col{{\rm col\,}}
\def\ker{{\rm ker\,}}
\def\cker{{\rm \overline{ker}\,}}
\def\ran{{\rm ran\,}}
\def\cran{{\rm \overline{ran}\,}}
\def\dom{{\rm dom\,}}
\def\mul{{\rm mul\,}}
\def\cdom{{\rm \overline{dom}\,}}
\let\xker=\ker \def\ker{{\xker\,}}
\def\span{{\rm span\,}}
\def\uphar{{\upharpoonright\,}}
\DeclareMathOperator{\hplus}{\, \widehat + \,}
\DeclareMathOperator{\hoplus}{\, \widehat \oplus \,}
\newtheorem{theorem}{Theorem}[section]
\newtheorem{proposition}[theorem]{Proposition}
\newtheorem{corollary}[theorem]{Corollary}
\newtheorem{lemma}[theorem]{Lemma}
\newtheorem{definition}[theorem]{Definition}
\theoremstyle{definition}
\newtheorem{example}[theorem]{Example}
\newtheorem{remark}[theorem]{Remark}
\numberwithin{equation}{section}
\begin{document}

\title[Monotone sequences of operators]
{Sequences of operators, monotone in the sense of contractive domination}

\author[S.~Hassi]{S.~Hassi}
\author[H.S.V.~de~Snoo]{H.S.V.~de~Snoo}

\address{Department of Mathematics and Statistics \\
University of Vaasa \\
P.O. Box 700, 65101 Vaasa \\
Finland} \email{sha@uwasa.fi}

\address{Bernoulli Institute for Mathematics, Computer Science and Artificial Intelligence \\
University of Groningen \\
P.O. Box 407, 9700 AK Groningen \\
Nederland}
\email{h.s.v.de.snoo@rug.nl}
%\address{Johann Bernoulli Institute for Mathematics and Computer Science\\
%University of Groningen \\
%P.O. Box 407, 9700 AK Groningen \\
%Nederland}
%\email{h.s.v.de.snoo@rug.nl}

\dedicatory{Dedicated to the memory of V.E. Katsnelson}

\date{}
\thanks{The second author is grateful to the University of Vaasa for its hospitality
when a final version of the present paper was being prepared.}

%\translator{}

%\date{\today}
%\date{ \today; Filename: \jobname.}

\keywords{Domination of linear relations, nondecreasing sequences of linear relations in the sense
of domination, monotonicity principle}
\subjclass{Primary 47A30, 47A63, 47B02; Secondary 47B25, 47B65}

\begin{abstract}
A sequence of operators $T_n$ from a Hilbert space $\sH$
to Hilbert spaces $\sK_n$
which is nondecreasing in the sense of contractive domination
is shown to have a limit which is still a linear operator $T$ from $\sH$
to a Hilbert space $\sK$.
Moreover, the closability or closedness of $T_n$ is preserved in the limit.
The closures converge likewise and the connection between the limits is investigated.
There is no similar way of dealing directly with linear relations.
However, the sequence of closures is still nondecreasing
and then the convergence is governed by
the monotonicity principle. There are some related results
for nonincreasing sequences.
 \end{abstract}

\maketitle

\section{Introduction}%\label{sect1}

Let $T_n \in \bL(\sH, \sK_n)$, $n \in \dN$,  be a sequence of linear operators
from a Hilbert space $\sH$ to a Hilbert space $\sK_n$, which satisfy
\begin{equation}\label{een}
 \dom T_{n+1} \subset \dom T_n \quad \mbox{and} \quad
 \|T_n f\| \leq \|T_{n+1}f\|, \,\, f \in \dom T_{n+1}.
\end{equation}
Here and elsewhere the notation $\bL(\sH, \sK)$ indicates the class
of all linear relations between the Hilbert spaces $\sH$ and $\sK$.
It will be shown that there exists a limit of this sequence, namely a linear operator
$T \in \bL(\sH, \sK)$, whose domain is given by
\[
\dom T= \left\{ \varphi \in
  \bigcap_{n \in \dN} \dom T_n :\, \sup_{n \in \dN} \|T_n \varphi\| < \infty \right\},
\]
while, furthermore,
\[
 \|T_n f\| \nearrow \|Tf\|  \quad \mbox{for all} \quad f \in \dom T.
\]
The limit is uniquely determined up to partial isometries.
Moreover, it will be shown that closability and closedness of the operators $T_n$
are preserved in the limit.
The main idea about the existence of the limit is the notion of a representing
map that was described by Szyma\'nski \cite{Szy87}.
In the present paper the emphasis is on how to construct the limit
of the sequence of operators and to discuss analogous sequences of linear relations.
There is a close connection with similar convergence results
in the context of nonnegative forms by Simon \cite{S3} (see also \cite{RS1}),
but the details will be left for a treatment in \cite{HS2023b} in terms of Lebesgue
decompositions and Lebesgue type decompositions of semibounded forms.

\medskip

The monotonicity in \eqref{een} can also be discussed for the case of linear
relations $T_n \in \bL(\sH, \sK_n)$ by requiring that $T_{n+1}$
contractively dominates $T_n$, i.e., there are  contractions
$C_n \in \bL(\sH_{n+1}, \sH_n)$  which satisfy
\begin{equation}\label{twee}
 C_n T_{n+1} \subset T_n.
\end{equation}
Likewise, this kind of monotonicity is preserved under closures $T_n^{**}$
and under taking regular parts $T_{n, {\rm reg}}$
of the relations $T_n$ (see below). In general there is no convergence result as for operators.
However, the  regular parts $T_{n, {\rm reg}}$ form a nondecreasing sequence of closable operators
(as in \eqref{een})  and one may apply the above mentioned results for operators.
Thanks to the condition \eqref{twee} the sequence of nonnegative selfadjoint relations
$T_n^*T_n^{**}$ is nondecreasing in the usual sense and  the monotonicity principle
may be applied. This connects the various forms of convergence.

\medskip

As mentioned above, in the present paper regular parts  of operators or relations
play an important role.
The regular part $T_{\rm reg}$ of a linear relation
$T \in \bL(\sH, \sK)$ shows up in its Lebesgue decomposition, as follows
\[%begin{equation}\label{drie}
T=T_{\rm reg}+T_{\rm sing} \quad \mbox{with} \quad
T_{\rm reg}=(I-P)T, \,\,T_{\rm sing}=PT,
\]%end{equation}
where $P$ stands for the orthogonal projection from $\sK$ onto $\mul T^{**}$;
see \cite{HSS2018}, \cite{HSSS2007}.
Hence $T_{\rm reg}$ is a closable operator, while $T_{\rm sing}$ is singular in the sense that
its closure in the graph sense is the product of closed linear subspaces;
note in particular that  $\ran T_{\rm reg} \perp \mul T^{**}$.
The regular part $T_{\rm reg}$ is the largest closable operator that is  dominated  by $T$
in the sense of contractive domination.
There is an interplay with the closure $T^{**}$ of $T$, given by the formula
\begin{equation}\label{vier}
(T^{**})_{\rm reg}=(T_{\rm reg})^{**},
\end{equation}
see Section \ref{app}.
If the relation $T$ is closed, then $\mul T^{**}=\mul T$ and $T_{\rm reg}$ is the usual
closed orthogonal operator part of $T$, often denoted by $T_{\rm op}$. In this
case, clearly,  $T_{\rm reg} \subset T$ and $T$ has the decomposition
\[
 T=T_{\rm reg} \hplus (\{0\} \times \mul T),
\]
where the sum is componentwise.  Note that
%the notion of operator part appears in \eqref{vier}:
the left-hand side of the identity \eqref{vier} stands
for the orthogonal operator part of $T^{**}$.
In the general case the following identity
\[%begin{equation}\label{vijf}
T^*T^{**}= (T_{\rm reg})^*(T_{\rm reg})^{**}
\]%end{equation}
expresses the nonnegative selfadjoint relation on the left-hand side in terms
of a similar product of closable operators.

\medskip

The case of a sequence of nonincreasing linear operators will also be discussed
with the same methods. Now closability is not preserved so that the main result
is about a nonincreasing sequence of closed linear operators.

\medskip

The paper is organized as follows.  In Section \ref{reldom} there is brief
review of the notion of contractive domination for relations and operators.
For the convenience of the reader the relevant facts for the monotonicity principle
are reviewed in Section \ref{monop}. The representing map
is discussed in Section \ref{repmap} in an appropriate context.
The convergence results are treated next.
The general case of sequences of linear operators can be found in Section \ref{nondecop},
the special case of sequences of closable operators is treated in Section \ref{nondecclos},
and the general case of sequences of linear relations is given in Section \ref{nondecrel}.
In this last section one can also find the connection with the monotonicity principle.
In Section \ref{Ex} a simple example shows the different behaviours of the various sequences
that have been considered. The approximation of closed linear operators
is considered in Section \ref{closed}.  A brief discussion about nonincreasing sequences
of linear operators or relations can be found in Section \ref{noninc}.
Finally, in Section \ref{app} there is a collection of facts concerning the regular part
of the relations $T^*T$ and $T^*T^{**}$ which are used throughout this paper.

\medskip

In the present paper the interest is in monotone sequences of linear operators
or relations in a Hilbert space.
The above mentioned results have a close connection to work on sequences of operators
in the literature; see \cite{Kato}, \cite[Supplement to VIII.7]{RS1}, and \cite{S3}.
The present work also connects sequences which are monotone in the sense of
contractive domination with the monotonicity principle in its version for semibounded selfadjoint relations
\cite{BHS}. Related results in the context of sequences  of semibounded quadratic forms
will be discussed in \cite{HS2023b} (including the connections to \cite{S3} and \cite{AtE1}).

\section{Contractive domination for linear relations}\label{reldom}

The notion of domination for linear relations was introduced in \cite{HS2015}.
The definition and some basic properties are given here.

\begin{definition}\label{domi}
Let $\sH_A$, $\sH_B$, and $\sH$ be Hilbert spaces,
let $A \in \bL(\sH, \sH_A)$ and  let
$B \in \bL(\sH, \sH_B)$. Then $B$ is said to contractively dominate $A$,
denoted by $A \prec_c B$,
if there exists a contraction $C \in \mathbf{B}(\sH_B, \sH_A)$
such that
\begin{equation}\label{domm}
 CB \subset A.
\end{equation}
\end{definition}

It follows from $C \in \mathbf{B}(\sH_B, \sH_A)$
that $CB=\{ \{f,Cf'\} :\, \{f,f'\} \in B\}$.
Therefore,  \eqref{domm} implies
\begin{equation}\label{dome}
\begin{cases}{}
  \,\,\dom B \subset \dom A,  &
  \ker B \subset \ker A, \\
  \,\, C(\ran B) \subset \ran A, &
   C(\mul B) \subset \mul A.
\end{cases}
\end{equation}
Observe that Definition \ref{domi} implies that
the contraction $C \in \mathbf{B}(\sH_B, \sH_A)$ is only fixed
as a mapping form $\ran B$ to $\ran A$. In fact,
the boundedness of $C$ implies that $C$ takes
$\cran B$ into $\cran A$.  Hence, it may and will be assumed
that
\[%begin{equation}\label{uni}
C ((\ran B)^\perp) =\{0\}.
\]%end{equation}

\medskip

Note that if $A$ and $B$ are linear relations
which satisfy $B \subset A$, then
$B$ contractively dominates $A$ with $C=I_{\ran B}$.
In particular, $A$ contractively dominates $A^{**}$.
Finally, the notion of contractive domination is transitive:
 \[
\begin{array}{l}
  A \prec_c B\quad\textrm{and}\quad B \prec_c C\quad \Rightarrow \quad A \prec_c C.
\end{array}
\]
If $A \prec_c B$ with a contraction $C \in \mathbf{B}(\sH_B, \sH_A)$, then
it follows from \eqref{domm} and \cite[Proposition 1.3.9]{BHS} that
\begin{equation}\label{dom1}
 A^{*} \subset B^{*} C^{*} \quad \mbox{and} \quad CB^{**} \subset A^{**}.
\end{equation}
In other words, the second inclusion in \eqref{dom1}
shows that the contractive domination in \eqref{domm} is preserved
with the same operator $C$. In particular, if $A \prec_c B$,
then the following inclusions are valid:  $\ran A^*\subset \ran B^*$
and  $\dom B^{**}\subset \dom A^{**}$.
Recall that in the particular case when $A$ and $B$ in Definition \ref{domi}
are linear operators it is possible to give an equivalent
characterization of contractive domination:  $A \prec_c B$
if and only if
\[%begin{equation}\label{dominate}
 \dom B \subset \dom A \quad \text{and}
  \quad \|A f\|\le \|B f\|,
 \quad f\in \dom B.
\]%end{equation}

The following result shows that contractive domination is preserved by the regular parts.
This observation goes back to \cite{S3} for the case of nonnegative forms and to  \cite{HSS2018}.
Furthermore, it is shown that there is a converse statement in the case of closed linear relations.

\begin{lemma}\label{regg}
Let $A \in \bL(\sH, \sH_A)$ and $B \in \bL(\sH, \sH_B)$ be linear relations.
Then
\[
A \prec_c B \quad \Rightarrow \quad  A_{\rm reg} \prec_c B_{\rm reg}.
\]
Moreover, if the  linear relations $A$ and $B$ are closed, then
\[
A \prec_c B \quad \Leftrightarrow \quad  A_{\rm reg} \prec_c B_{\rm reg}.
\]
\end{lemma}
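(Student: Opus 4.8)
The plan is to reduce everything to the decomposition $T = T_{\rm reg} \hplus (\{0\}\times\mul T^{**})$ (in the closed case) and to the transformation formula $CB^{**}\subset A^{**}$ from \eqref{dom1}, together with the defining property of the regular part as the \emph{largest} closable operator contractively dominated by the relation. The first (and easy) implication is obtained as follows. Assume $A \prec_c B$, so $CB\subset A$ for some contraction $C \in \mathbf{B}(\sH_B,\sH_A)$. By \eqref{dom1} we also have $CB^{**}\subset A^{**}$. Now $B_{\rm reg}$ is, by definition, a closable operator with $B_{\rm reg} \prec_c B^{**}$ (indeed $B_{\rm reg} = (I-P_B)B$ with $P_B$ the orthogonal projection onto $\mul B^{**}$, and the contraction realising $B_{\rm reg}\prec_c B$ is $I-P_B$ restricted to $\ran B$). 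Chaining contractive dominations via transitivity, $B_{\rm reg} \prec_c B \prec_c$ is not quite what I want; instead I use that $C B_{\rm reg} \subset C B^{**} \subset A^{**}$, so the closable operator $C B_{\rm reg}$ is contractively dominated by $A^{**}$, hence by $A$ (since $A \prec_c A^{**}$ fails in general — rather $A^{**}\prec_c A$ is the wrong direction too). Let me instead argue directly: $C B_{\rm reg}$ is an operator contractively dominated by $A$, because $C B_{\rm reg} \subset C B \subset A$ and $C$ restricted appropriately is a contraction; moreover $C B_{\rm reg}$ is closable since its closure $(C B_{\rm reg})^{**} \subset C B^{**} \subset A^{**}$ has trivial multivalued part whenever... — the cleanest route is: $C B_{\rm reg} \prec_c A$ and $C B_{\rm reg}$ closable, so by the maximality property of $A_{\rm reg}$ (the largest closable operator contractively dominated by $A$) we get $C B_{\rm reg} \prec_c A_{\rm reg}$, hence by transitivity $A_{\rm reg} \prec_c B_{\rm reg}$ once we also know the map $B_{\rm reg}\mapsto C B_{\rm reg}$ is itself a contractive domination in the right direction. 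This last point is what needs care: I expect the main obstacle to be checking that $C$ indeed induces $A_{\rm reg} \prec_c B_{\rm reg}$ with an honest contraction, i.e.\ that $P_A C (I - P_B) = 0$ so that $C$ maps $\ran B_{\rm reg}$ into $\ran A_{\rm reg}$; this should follow from $C(\mul B^{**})\subset \mul A^{**}$ (a consequence of $CB^{**}\subset A^{**}$ via \eqref{dome}) together with the orthogonality $\ran A_{\rm reg}\perp \mul A^{**}$.

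For the converse, assume $A$ and $B$ are closed and $A_{\rm reg}\prec_c B_{\rm reg}$, say $D B_{\rm reg} \subset A_{\rm reg}$ with $D$ a contraction, and $D$ annihilating $(\ran B_{\rm reg})^\perp$. Because $A$ and $B$ are closed we have the orthogonal decompositions $A = A_{\rm reg} \hplus (\{0\}\times\mul A)$ and $B = B_{\rm reg} \hplus (\{0\}\times\mul B)$, with $\mul A = \mul A^{**}$, $\mul B = \mul B^{**}$. The plan is to build a contraction $C \in \mathbf{B}(\sH_B,\sH_A)$ realising $CB\subset A$ by setting $C = D$ on $\cran B_{\rm reg}$ and $C = 0$ on $(\ran B_{\rm reg})^\perp \supset \mul B$. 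Then for $\{f,f'\}\in B$ we write $f' = g' + h'$ with $\{f,g'\}\in B_{\rm reg}$ and $h'\in\mul B$, and compute $Cf' = Dg' + 0 = Dg'$, where $\{f,Dg'\}\in A_{\rm reg}\subset A$; hence $\{f,Cf'\}\in A$, i.e.\ $CB\subset A$. Since $D$ is a contraction on $\cran B_{\rm reg}$ and $C$ kills the orthogonal complement, $C$ is a contraction on all of $\sH_B$. This gives $A\prec_c B$.

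The only subtlety in the converse is the legitimacy of defining $C$ separately on $\cran B_{\rm reg}$ and its orthogonal complement: this is exactly the normalisation $C((\ran B)^\perp)=\{0\}$ already built into Definition~\ref{domi} (as noted in the text right after \eqref{dome}), applied with $\ran B_{\rm reg}$ in place of $\ran B$, and it is consistent because $\mul B = \mul B^{**}\perp \ran B_{\rm reg}$. I would therefore organise the proof as: (i) record the structure $B_{\rm reg}=(I-P_B)B$, $A_{\rm reg}=(I-P_A)A$ and the orthogonality $\ran A_{\rm reg}\perp\mul A^{**}$ from Section~\ref{app}; (ii) prove the forward implication using \eqref{dom1}, \eqref{dome}, and the maximality of the regular part; (iii) prove the converse in the closed case by the explicit gluing construction above. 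The forward direction is where the real work lies; the converse is essentially bookkeeping once closedness gives the orthogonal graph decomposition.
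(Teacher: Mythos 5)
Your converse (closed case) is essentially the paper's own argument and is correct: normalise the contraction $D$ realising $A_{\rm reg}\prec_c B_{\rm reg}$ to vanish on $(\ran B_{\rm reg})^\perp\supset\mul B$, so that $DB=DB_{\rm reg}\subset A_{\rm reg}\subset A$, the last inclusion using the closedness of $A$. No issues there.

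The forward direction, however, has a genuine gap. After several discarded attempts you settle on realising $A_{\rm reg}\prec_c B_{\rm reg}$ with the contraction $C$ itself, and you identify $P_AC(I-P_B)=0$ as the condition to be verified, asserting that it follows from $C(\mul B^{**})\subset\mul A^{**}$ together with $\ran A_{\rm reg}\perp\mul A^{**}$. It does not: a contraction mapping $\mul B^{**}$ into $\mul A^{**}$ need not map $(\mul B^{**})^\perp$ into $(\mul A^{**})^\perp$. A concrete counterexample: $\sH=\sH_B=\dC$, $\sH_A=\dC^2$, $B=\{\{z,2z\}:z\in\dC\}$ (so $P_B=0$), $A=\{\{z,(z,w)\}:z,w\in\dC\}$ (so $\mul A^{**}=\{0\}\times\dC$), and $Cu=(u,u)/2$; then $C$ is a contraction with $CB\subset A$, both of your hypotheses hold, yet $P_AC\neq0$ and indeed $CB_{\rm reg}\not\subset A_{\rm reg}$. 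The repair is exactly what the paper does: use the contraction $(I-P_A)C$ instead of $C$. Writing $\{f,f'\}\in B$ as $\{f,(I-P_B)f'+P_Bf'\}$ and using $CP_Bf'\in\mul A^{**}$ gives $(I-P_A)Cf'=(I-P_A)C(I-P_B)f'$, whence $[(I-P_A)C]B_{\rm reg}\subset A_{\rm reg}$ directly, with no appeal to maximality of the regular part. The maximality detour is also misdirected: $CB_{\rm reg}\subset A$ yields $A\prec_c CB_{\rm reg}$, not $CB_{\rm reg}\prec_c A$, so the ``largest closable operator dominated by $A$'' property does not apply to $CB_{\rm reg}$; and even the conclusion you draw from it, $CB_{\rm reg}\prec_c A_{\rm reg}$, would not chain by transitivity to the desired $A_{\rm reg}\prec_c B_{\rm reg}$.
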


\begin{proof}
Assume that $CB \subset A$ with a contraction $C \in \mathbf{B}(\sH_B, \sH_A)$.
By \eqref{dome} the operator $C$ maps $\mul B^{**}$ into $\mul A^{**}$.
Let $P_B$ be the orthogonal projection onto $\mul B^{**}$ and let $P_A$
be the orthogonal projection onto $\mul A^{**}$. Let $\{f,f'\} \in B$
and write $\{f,f'\}=\{f,(I-P_B)f'+P_Bf' \}$ (i.e., the Lebesgue decomposition of $B$).
Here $P_B f' \in \mul B^{**}$ and
one concludes that
\[
\{f, Cf'\} = \{f, C(I - P_B )f' + CP_B f'\} \in A,
\]
where $CP_B f' \in \mul A^{**}$. Now observe that
\[
\{f, (I - P_B )f'\} \in B_{\rm reg} \quad \mbox{and} \quad  \{f, (I - P_A)C(I - P_B )f'\} \in  A_{\rm reg}.
\]
Equivalently,
this leads to  $ [(I - P_A)C]B_{\rm reg} \subset A_{\rm reg}$,
and since ($I - P_A)C$ is a contraction
this implies $A_{\rm reg} \prec_c B_{\rm reg}$.

Let $A \in \bL(\sH, \sH_A)$ and $B \in \bL(\sH, \sH_B)$ be closed linear relations.
Then $A_{\rm reg}$ and  $B_{\rm reg}$, belonging to  $\bB(\sH_A, \sH_B)$,
are the closed linear operator parts.
Assume the inequality $A_{\rm reg}\prec B_{\rm reg}$. Then
there exists a contraction $C \in \bB(\sH_B, \sH_A)$ such that $C B_{\rm reg} \subset A_{\rm reg}$.
Without loss of generality one may take $C \uphar (\ran B_{\rm reg})^\perp=0$.
Then, in particular, $C \uphar \ran P_B=\{0\}$ and it follows from
the Lebesgue decomposition $B=B_{\rm reg}+B_{\rm sing}$ that
\[
CB=CB_{\rm reg} \subset A_{\rm reg}.
\]
Since $A$ is closed, one sees that $A_{\rm reg}\subset A$.
Therefore, $CB \subset A$ and $A \prec_c B$.
\end{proof}

The equivalence in the above theorem is restricted to closed linear relations.
By modifying the notion of domination the condition that the relations are closed
can be relaxed by introducing a weaker form of the Lebesgue decomposition; cf.
\cite{HSS2018}, \cite{HSSz2009}.

\medskip

Contractive domination of closed linear relations can be characterized
in terms of the corresponding nonnegative selfadjoint relations;
see \cite[Theorem~4.4]{HS2015}.
Recall from \cite[Definition 5.2.8]{BHS} that two nonnegative relations
$H_1$ and $H_2$ in $\bL(\sH)$ satisfy $H_1 \leq H_2$ when
\begin{equation}\label{root}
\dom H_2^\half \subset \dom H_1^\half \quad \mbox{and} \quad
\|(H_{1, {\rm reg}})^\half f\| \leq \|(H_{2, {\rm reg}})^\half\|, \,\, f \in \dom H_2^\half.
\end{equation}
With this definition the following theorem is clear.

\begin{theorem}\label{nieuw0}
 Let $A \in \bL(\sH, \sH_A)$ and $B \in \bL(\sH, \sH_B)$ be closed linear relations.
Then the following statements are equivalent
\begin{enumerate}\def\labelenumi{\rm(\roman{enumi})}
\item $A^*A \leq B^*B$;
\item $A \prec_c B$ or, equivalently, $A_{\rm reg} \prec_c B_{\rm reg}$.
\end{enumerate}
 \end{theorem}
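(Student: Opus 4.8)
The plan is to reduce the equivalence to a chain of already-established facts, using Lemma \ref{regg} and the definition \eqref{root} of order for nonnegative selfadjoint relations, together with the cited characterization \cite[Theorem~4.4]{HS2015}. Since $A$ and $B$ are closed, their regular parts $A_{\rm reg}$ and $B_{\rm reg}$ are the closed orthogonal operator parts, and Lemma \ref{regg} gives the equivalence $A \prec_c B \Leftrightarrow A_{\rm reg} \prec_c B_{\rm reg}$ for free; this takes care of the second half of statement (ii). So the whole content is the equivalence (i) $\Leftrightarrow$ $A \prec_c B$.

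First I would unwind the left-hand side. For a closed linear relation $A$ the nonnegative selfadjoint relation $A^*A$ has $(A^*A)_{\rm reg} = (A_{\rm reg})^*(A_{\rm reg})^{**} = A_{\rm reg}^*A_{\rm reg}$ (using closedness and the identities recalled in the introduction and Section \ref{app}), and $\dom (A^*A)^{\half} = \dom A_{\rm reg} = \dom A$ with $\|(A^*A)_{\rm reg}^{\half} f\| = \|A_{\rm reg} f\| = \|A f\|$ for $f$ in this domain; the last equality holds because $A = A_{\rm reg} \hplus (\{0\}\times\mul A)$ acts isometrically like $A_{\rm reg}$ on the norm of the image modulo $\mul A$, and $A^*A$ only sees that part. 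The same applies to $B$. Feeding this into \eqref{root}, the inequality $A^*A \leq B^*B$ translates verbatim into
\[
 \dom B \subset \dom A \quad\text{and}\quad \|Af\| \leq \|Bf\|, \quad f \in \dom B.
\]
By the operator characterization of contractive domination recalled just before Lemma \ref{regg}, applied to the closed operators $A_{\rm reg}$ and $B_{\rm reg}$ (or directly, noting the displayed inequality is exactly $A_{\rm reg} \prec_c B_{\rm reg}$ once one checks $\dom A_{\rm reg}=\dom A$, $\|A_{\rm reg}f\|=\|Af\|$), this is precisely $A_{\rm reg} \prec_c B_{\rm reg}$, which by Lemma \ref{regg} is equivalent to $A \prec_c B$. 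That closes the loop.

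The main obstacle — really the only place where care is needed — is the identification $\dom (A^*A)^{\half} = \dom A$ together with $\|(A^*A)_{\rm reg}^{\half}f\| = \|Af\|$ for a (possibly multivalued) closed relation $A$; this is where one must invoke the facts collected in Section \ref{app} about the regular part of $A^*A$ and $A^*A^{**}$, in particular that $(A^*A)_{\rm reg} = A_{\rm reg}^*A_{\rm reg}$ and that $\dom A_{\rm reg} = \dom A$ when $A$ is closed. Everything else is either Lemma \ref{regg}, the definition \eqref{root}, or the elementary operator reformulation of $\prec_c$. Indeed, as the text remarks, once \eqref{root} is in place "the following theorem is clear," so the proof can be kept short: state the two translations of (i) into operator language via Section \ref{app}, observe they coincide with the operator form of $A_{\rm reg} \prec_c B_{\rm reg}$, and conclude with Lemma \ref{regg}. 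Alternatively, one may simply cite \cite[Theorem~4.4]{HS2015} for (i) $\Leftrightarrow$ $A \prec_c B$ directly and use Lemma \ref{regg} for the equivalence with $A_{\rm reg} \prec_c B_{\rm reg}$.
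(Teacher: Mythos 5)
Your proposal is correct and follows essentially the same route as the paper: the paper's proof also sets $H_1=A^*A$, $H_2=B^*B$, invokes Lemma \ref{regs+} to get $\dom H_i^{\shalf}=\dom A$ (resp.\ $\dom B$) and $(H_{i,{\rm reg}})^{\shalf}=U_iA_{\rm reg}$ (resp.\ $U_iB_{\rm reg}$) for partial isometries $U_i$, and then reads off from \eqref{root} that (i) is exactly $A_{\rm reg}\prec_c B_{\rm reg}$, with Lemma \ref{regg} supplying the equivalence to $A\prec_c B$. The only cosmetic point is that for a multivalued relation the expression $\|Af\|$ should be read as $\|A_{\rm reg}f\|$, which you note yourself, so nothing is missing.
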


\begin{proof}
Let $H_1=A^*A$ and $H_2=B^*B$.
By Lemma \ref{regs+} it follows that there exist partial isometries
$U_1 \in \bL(\sH_A, \sH)$ and $U_2 \in \bL(\sH_B, \sH)$, such that.
\[
\left\{
\begin{array}{l}
 \dom H_1^\half =\dom A, \quad (H_{1, {\rm reg}})^\half=U_1 A_{\rm reg}, \\
 \dom H_2^\half =\dom B, \quad (H_{2, {\rm reg}})^\half=U_2 B_{\rm reg}.
\end{array}
\right.
\]
Therefore by means of \eqref{root} this shows that $A^*A \leq B^*B$, i.e.,  $H_1 \leq H_2$,
is equivalent to the assertions
\[
\left\{
\begin{array}{l}
 \dom B \subset \dom A, \\
 \|A_{\rm reg} h\| \leq  \|B_{\rm reg} h\|, \quad h \in \dom B.
 \end{array}
 \right.
\]
In other words, the inequality $A^*A \leq B^*B$ in (i)
is equivalent to the inequality $A_{\rm reg} \prec_c B_{\rm reg}$ in (ii).
\end{proof}

This characterization makes it possible to apply the monotonicity principle in the next section.

\section{The monotonicity principle}\label{monop}

A linear relation $H \in \bL(\sH)$ is called the \textit{strong graph limit}
of a sequence of linear relations $H_n \in \bL(\sH)$, $n \in \dN$,
if for each $\{h,h'\} \in H$ there exists a sequence $\{h_n, h_n'\} \in H_n$
such that $\{h_n,h_n'\} \to \{h,h'\}$; see \cite[Definition 1.9.1]{BHS}.
The strong graph limit is automatically closed, see \cite[p. 80]{BHS}.
Clearly, if all $H_n$ are symmetric, then $H$ is symmetric.
In particular, if all $H_n$ are nonnegative, then $H$ is nonnegative.

\begin{lemma}%\label{mmonn}
Let $H_n \in \bL(\sH)$ be a sequence of nonnegative selfadjoint relations
and let its strong graph limit $H_\infty$ be nonnegative and selfadjoint.
Then for every $f \in \dom H_{\infty}$  there exists a
sequence $f_n  \in \dom H_{n}$ such that
\[%begin{equation}\label{vask}
 f_n \to f \quad \mbox{and} \quad
 \| (H_{n, {\rm reg}})^\half f_n\| \to \| (H_{\infty, {\rm reg}})^\half f\|.
\]%end{equation}
\end{lemma}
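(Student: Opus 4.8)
The plan is to build the approximating sequence directly from the definition of the strong graph limit and then to reduce the convergence of the norms to the convergence of a single inner product. The device that makes this work is the identity
\[
 \|(H_{\rm reg})^\half f\|^2 = \langle f',f\rangle , \qquad \{f,f'\}\in H ,
\]
valid for every nonnegative selfadjoint relation $H\in\bL(\sH)$ and every $f\in\dom H$. I would establish this first. Since $H$ is closed, $H_{\rm reg}$ is the orthogonal operator part of $H$, a nonnegative selfadjoint operator in $\sH_{\rm op}=\overline{\dom H}=(\mul H)^\perp$, and $\dom H=\dom H_{\rm reg}\subset\dom(H_{\rm reg})^\half$. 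Writing $P$ for the orthogonal projection onto $\mul H$, one has $H_{\rm reg}f=(I-P)f'$ while $Pf=0$ because $f\in\dom H\perp\mul H$; hence the standard identity $\|(H_{\rm reg})^\half f\|^2=\langle H_{\rm reg}f,f\rangle$ gives $\langle(I-P)f',f\rangle=\langle f',(I-P)f\rangle=\langle f',f\rangle$. Any two vectors $f'$ with $\{f,f'\}\in H$ differ by an element of $\mul H$, which is orthogonal to $f$, so the right-hand side does not depend on the choice of $f'$.

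Granting this, the rest is short. Let $f\in\dom H_\infty$ and fix $f'$ with $\{f,f'\}\in H_\infty$. Since $H_\infty$ is the strong graph limit of the $H_n$, the definition provides a sequence $\{f_n,f_n'\}\in H_n$ with $f_n\to f$ and $f_n'\to f'$; in particular $f_n\in\dom H_n$. Applying the identity to each $H_n$ and to $H_\infty$ yields
\[
 \|(H_{n,\rm reg})^\half f_n\|^2=\langle f_n',f_n\rangle\ \longrightarrow\ \langle f',f\rangle=\|(H_{\infty,\rm reg})^\half f\|^2
\]
by continuity of the inner product, and since both sides are nonnegative and $t\mapsto\sqrt{t}$ is continuous on $[0,\infty)$, it follows that $\|(H_{n,\rm reg})^\half f_n\|\to\|(H_{\infty,\rm reg})^\half f\|$, which is the assertion.

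I expect the only genuine work to be the displayed identity: the bookkeeping with multivalued parts (that $\dom H\perp\mul H$ for a selfadjoint relation, that $\dom H\subset\dom(H_{\rm reg})^\half$, and that $\|(H_{\rm reg})^\half f\|^2=\langle H_{\rm reg}f,f\rangle$ reduces to the operator part of $H$); once that is in place everything is continuity. It should be stressed that monotonicity of the sequence $(H_n)$ is not needed in this lemma — only that each $H_n$ is nonnegative selfadjoint and that the strong graph limit is again nonnegative selfadjoint. An alternative route, which avoids using the identity for $H_\infty$ itself, is to note that strong graph convergence to a selfadjoint relation implies the strong convergence $(H_n+I)^{-1}\to(H_\infty+I)^{-1}$ and then to take $f_n=(H_n+I)^{-1}(f+f')$: this $f_n$ lies in $\dom H_n$, satisfies $f_n\to f$, and satisfies $\{f_n,\,f+f'-f_n\}\in H_n$, so that $\langle f+f'-f_n,f_n\rangle\to\langle f',f\rangle$ and the conclusion follows as before.
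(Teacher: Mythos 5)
Your proposal is correct and follows essentially the same route as the paper: both proofs rest on the identity $\|(H_{\rm reg})^{1/2}f\|^2=(f',f)$ for $\{f,f'\}\in H$ and then conclude by strong graph convergence and continuity of the inner product, the only difference being that the paper verifies this identity via the factorization through an intermediate element $h$ with $\{f,h\},\{h,f'\}\in H^{1/2}$, while you verify it via the operator part and the formula $\|(H_{\rm reg})^{1/2}f\|^2=(H_{\rm reg}f,f)$. Your observation that monotonicity is not needed matches the paper, which likewise states the lemma without any monotonicity hypothesis.
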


\begin{proof}
Let $A \in \bL(\sH)$ be any nonnegative selfadjoint relation with square root $A^\half$.
Recall that $\mul A^\half=\mul A$, so that $(A^\half)_{\rm reg}= (A_{\rm reg})^\half$.
If $\{f,f'\} \in A$, then there exists an element $h \in \sH$ such that $\{f,h\} \in A^\half$
and $\{h, f'\} \in A^\half$, which gives
\begin{equation}\label{sq1}
 (f',f)=\|h\|^2.
\end{equation}
Since $h \in \dom A^\half \subset (\mul A)^\perp$, one sees that $h= (A_{\rm reg})^\half f$.
Therefore, it is clear that \eqref{sq1} may be written as
\begin{equation}\label{sq2}
 (f',f)=(A_{\rm reg} f,f)=\| (A_{\rm reg})^\half f\|^2.
\end{equation}

Now let $f \in \dom H_\infty$, then  $\{f,f'\} \in H_\infty$ for some $f' \in \sH$.
By the strong graph convergence
there exists a sequence
$\{f_n, f_n'\} \in H_n$ such that $f_n \to f$ and $f_n' \to f'$.
Therefore, by definition,
there exist elements $h_n \in \sH$  such that
\[
 \{f_n, h_n\} \in (H_n)^\half \quad \mbox{and} \quad \{h_n, f_n'\} \in (H_n)^\half,
\]
and, likewise, there exists an element $h \in \sH$ such that
\[
\{f,h\} \in (H_\infty)^\half \quad \mbox{and} \quad \{h,f'\} \in (H_\infty)^\half.
\]
Then clearly
\[
 \|h_n\|^2=(f_n',f_n) \to (f',f)=\|h\|^2,
\]
or, equivalently, using \eqref{sq2},
\[
 \|(H_{n, {\rm reg}})^\half f_n\| \to \|(H_{\infty, {\rm reg}})^\half f\|.    \qedhere
\]
\end{proof}

In the case of a nondecreasing sequence of nonnegative selfadjoint
relations $H_n$ there is a much stronger result. First  observe that
\[
 H_m \leq H_n \quad \Leftrightarrow \quad (H_m)^\half \prec_c (H_n)^\half,
\]
due to Theorem \ref{nieuw0}, so that if $H_n$ is nondecreasing, one also has
\[
 (H_{m, {\rm reg}})^\half \prec_c  (H_{n, {\rm reg}})^\half.
\]
The following monotonicity principle will be recalled from \cite[Theorem 3.5]{BHSW2010},
\cite[Theorem 5.2.11]{BHS}.

\begin{theorem}\label{mopr}
Let $H_n \in \bL(\sH)$ be a sequence of nonnegative selfadjoint relations
and assume they satisfy
\[%begin{equation}\label{vasd}
H_m \leq H_n,  \quad m \leq n.
\]%end{equation}
 Then there exists a nonnegative selfadjoint relation $H_\infty \in \bL(\sH)$  with
\[%begin{equation}\label{vasd1}
H_n \leq H_\infty,  \quad n \in \dN.
\]%end{equation}
 In fact, $H_n \to H_\infty$ in the strong resolvent sense
or, equivalently, in the strong graph sense.
Moreover, the square root of $H_\infty$ satisfies
\begin{equation}\label{vasf}
\dom (H_\infty)^\half=\left\{ \varphi \in \bigcap_{n \in \dN} \dom (H_n)^\half  :\,
  \sup_{n \in \dN} \| (H_{n, {\rm reg}})^\half \varphi\| < \infty \right\}
\end{equation}
and, furthermore,
\begin{equation}\label{vasg}
 \| (H_{n,{\rm reg}})^\half \varphi\| \nearrow \| (H_{\infty, {\rm reg}})^\half \varphi\|,
 \quad \varphi \in \dom (H_\infty)^\half.
\end{equation}
\end{theorem}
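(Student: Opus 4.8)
The plan is to pass from the relations $H_n$ to the closed nonnegative forms they determine, to build the monotone limit of these forms, and to read the statement off from the correspondence between closed nonnegative forms and nonnegative selfadjoint relations. Concretely, I would attach to each $H_n$ the form $\st_n$ with $\dom\st_n=\dom (H_n)^\half$ and $\st_n[\varphi]=\|(H_{n,{\rm reg}})^\half\varphi\|^2$; this form is closed and nonnegative, and the assignment $H\mapsto\st$ is a bijection onto the closed nonnegative forms (the relation version of the first representation theorem; see \cite{BHS}, cf. Lemma \ref{regs+}). With this notation the definition \eqref{root} says precisely that $H_m\le H_n$ is equivalent to $\dom\st_n\subset\dom\st_m$ together with $\st_m[\varphi]\le\st_n[\varphi]$ for $\varphi\in\dom\st_n$. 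Hence the hypothesis of the theorem is exactly that $(\st_n)_{n\in\dN}$ is a nondecreasing sequence of closed nonnegative forms.

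Next I would introduce the candidate limit form by
\[
\dom\st_\infty=\Big\{\varphi\in\bigcap_{n\in\dN}\dom\st_n:\ \sup_{n\in\dN}\st_n[\varphi]<\infty\Big\},\qquad \st_\infty[\varphi]=\lim_{n\to\infty}\st_n[\varphi].
\]
That $\dom\st_\infty$ is a subspace and $\st_\infty$ a nonnegative form is routine (subadditivity of $\varphi\mapsto\st_n[\varphi]^\half$ and passage to the limit in $n$). The decisive point is that $\st_\infty$ is \emph{closed}. To see this, let $\varphi_k\in\dom\st_\infty$ satisfy $\varphi_k\to\varphi$ in $\sH$ and $\st_\infty[\varphi_k-\varphi_l]\to0$ as $k,l\to\infty$. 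For fixed $n$ one has $\st_n[\varphi_k-\varphi_l]\le\st_\infty[\varphi_k-\varphi_l]\to0$, so closedness of $\st_n$ yields $\varphi\in\dom\st_n$ and $\st_n[\varphi_k-\varphi]\to0$. Given $\varepsilon>0$, pick $K$ with $\st_\infty[\varphi_k-\varphi_l]<\varepsilon$, hence $\st_n[\varphi_k-\varphi_l]<\varepsilon$ for all $n$, whenever $k,l\ge K$; fixing $k\ge K$ and letting $l\to\infty$ gives $\st_n[\varphi_k-\varphi]\le\varepsilon$ for every $n$. Therefore
\[
\st_n[\varphi]^\half\le\st_n[\varphi_k]^\half+\st_n[\varphi_k-\varphi]^\half\le\st_\infty[\varphi_k]^\half+\varepsilon^\half
\]
uniformly in $n$, so $\varphi\in\dom\st_\infty$, and $\st_\infty[\varphi_k-\varphi]=\sup_n\st_n[\varphi_k-\varphi]\le\varepsilon$ for $k\ge K$. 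Thus $\st_\infty$ is closed.

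Finally, let $H_\infty$ be the nonnegative selfadjoint relation with form $\st_\infty$. Then $\dom (H_\infty)^\half=\dom\st_\infty$, which is \eqref{vasf}, while for $\varphi\in\dom\st_\infty$ the quantities $\|(H_{n,{\rm reg}})^\half\varphi\|^2=\st_n[\varphi]$ increase to $\st_\infty[\varphi]=\|(H_{\infty,{\rm reg}})^\half\varphi\|^2$, which gives \eqref{vasg}. Since $\dom\st_\infty\subset\dom\st_n$ and $\st_n[\varphi]\le\st_\infty[\varphi]$ for $\varphi\in\dom\st_\infty$, one obtains $H_n\le H_\infty$ for all $n$ (and a comparison of forms shows $H_\infty$ is the smallest such relation). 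For the convergence statement, the resolvents $R_n=(H_n+I)^{-1}\in\bB(\sH)$ are nonnegative contractions forming a nonincreasing sequence (because $H_n$ is nondecreasing), hence $R_n$ converges strongly to a nonnegative contraction $R$; using the variational formula
\[
\big((H_n+I)^{-1}g,g\big)=\sup_{\varphi\in\dom\st_n}\frac{|(\varphi,g)|^2}{\st_n[\varphi]+\|\varphi\|^2},\qquad g\in\sH,
\]
the analogous formula for $H_\infty$, and the lower semicontinuity of closed forms, one identifies $R=(H_\infty+I)^{-1}$, so that $H_n\to H_\infty$ in the strong resolvent sense, equivalently in the strong graph sense (see \cite{BHS}); this last step is also the monotone convergence theorem for forms (cf. \cite{Kato}, \cite{RS1}) transcribed to relations.

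I expect the closedness of $\st_\infty$ in the middle step to be the main obstacle: it is the one place where the monotonicity of the sequence is genuinely needed — the pointwise limit of an arbitrary sequence of closed forms need not be closed — and it is the technical core of the monotonicity principle. Everything else is either bookkeeping with the relation--form correspondence of \cite{BHS} or the standard identification of a monotone limit of resolvents.
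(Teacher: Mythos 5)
The paper does not prove Theorem \ref{mopr} at all: it is explicitly \emph{recalled} from \cite[Theorem 3.5]{BHSW2010} and \cite[Theorem 5.2.11]{BHS}, so there is no internal argument to compare against. What you have written is essentially a reconstruction of the standard proof that lives in those references: pass to the closed nonnegative forms $\st_n[\varphi]=\|(H_{n,{\rm reg}})^\half\varphi\|^2$ on $\dom (H_n)^\half$ via the representation theorem for (possibly non-densely defined) forms and nonnegative selfadjoint relations, observe that \eqref{root} makes the hypothesis exactly monotonicity of the forms, prove closedness of the supremum form $\st_\infty$, and read off \eqref{vasf} and \eqref{vasg}. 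Your closedness argument is correct and is indeed the technical core; the uniform-in-$n$ estimate $\st_n[\varphi_k-\varphi]\le\varepsilon$ obtained by letting $l\to\infty$ for fixed $n$ is exactly the right move. Two points deserve care. First, in the relation setting the domains $\dom\st_n$ need not be dense, so you must use the form--relation correspondence in its relation version (multivalued part of $H_\infty$ supported on $(\cdom \st_\infty)^\perp$; note the multivalued parts are nondecreasing, as the paper remarks after the theorem). Second, the identification of the strong limit $R=\slim (H_n+I)^{-1}$ with $(H_\infty+I)^{-1}$ is only sketched: the variational formula gives $R\ge (H_\infty+I)^{-1}$ easily, but the reverse inequality is the genuinely nontrivial direction and needs the lower semicontinuity argument (showing $u=Rg$ lies in $\dom\st_\infty$ with $\st_\infty[u]+\|u\|^2\le (g,u)$) spelled out; as written this step is an appeal to the literature rather than a proof. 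Given that the paper itself treats the whole theorem as a citation, that is a forgivable level of detail, but it is the one place where your write-up is not self-contained.
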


Note that the multivalued parts of the relations $H_n$ in Theorem \ref{mopr}
form a nondecreasing sequence. Of course, if all relations $H_n$ in Theorem \ref{incr}
are operators, then the limit $H_\infty$ may still be a linear relation with a nontrivial
multivalued part; see the example below.

\begin{example}\label{haha}
Let $A \in \bL(\sH)$ be a nonnegative selfadjoint operator or relation.
Then it is clear that the sequence $H_n=nA$ of nonnegative selfadjoint operators or relations
is nondecreasing. Hence there exists a nonnegative selfadjoint relation $H_\infty$
such that $H_n \to H_\infty$ is the strong graph sense.
To determine $H_\infty$, let $\{f,g\} \in H_\infty$,
then there exists a sequence $\{f_n, g_n\} \in H_n$
such that $f_n \to f$ and $g_n \to g$.
Here $g_n=n h_n$ with $\{f_n, h_n\} \in A$ and,
clearly, $h_n \to 0$. Since $A$ is closed, this implies $\{f,0\} \in A$.
Furthermore, note that $h_n \in \ran A \subset (\ker A)^\perp$.
Hence $g_n \in (\ker A)^\perp$ which implies $g \in (\ker A)^\perp$.
Therefore, it follows that
\[
 H_\infty=\ker A \times (\ker A)^\perp,
\]
since both relations are selfadjoint. Furthermore one has  $\dom (H_\infty)^\half=\ker A$
and $(H_\infty)_{\rm op}=\ker A \times \{0\}$ (as in \eqref{vasf} and \eqref{vasg}).
\end{example}

For sequences of closed relations which are nondecreasing in the
sense of domination there are close connections with Theorem \ref{mopr}
via Theorem \ref{nieuw0}.

\section{Semi-inner products and representing maps}\label{repmap}

Let $\sH$ be a Hilbert space with inner product $(\cdot, \cdot)$ and
let $\sD \subset \sH$ be a linear subspace which is provided with
a semi-inner product $(\cdot, \cdot)_+$. In the following lemma
it will be shown that such a subspace is generated by
a so-called representing map.  The assertion is inspired
by \cite{Szy87}.

\begin{lemma}\label{aux}
Let $\sH$ be a Hilbert space with inner product $(\cdot, \cdot)$.
Let $\sD \subset \sH$ be a linear subspace which is provided with a semi-inner product
$(\cdot, \cdot)_+$. Then there exists  a representing map
$T \in \bL(\sH, \sK)$, where $\sK$ is a Hilbert space, such that
\[
 (\varphi, \psi)_+=(T \varphi, T \psi)_\sK, \quad \varphi, \psi \in \sD=\dom T.
\]
If $T' \in \bL(\sH, \sK')$, where $\sK'$ is a Hilbert space,
is another representing map  with $\dom T'=\sD$,
then there exists a partial isometry $V \in \bB(\sK, \sK')$
with initial space $\cran T$ and final space $\cran T'$,
such that $T'=VT$.
\end{lemma}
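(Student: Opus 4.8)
The plan is to construct $T$ directly from the semi-inner product and then show any two such constructions differ by a partial isometry in the evident way.

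\medskip

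\noindent\textbf{Construction of the representing map.}
First I would pass to the quotient. The semi-inner product $(\cdot,\cdot)_+$ on $\sD$ has an isotropic subspace
\[
\sN=\{\varphi \in \sD :\, (\varphi,\varphi)_+=0\},
\]
which by the Cauchy--Schwarz inequality for semi-inner products coincides with $\{\varphi : (\varphi,\psi)_+=0 \mbox{ for all } \psi \in \sD\}$ and is therefore a linear subspace. On the quotient $\sD/\sN$ the form $(\cdot,\cdot)_+$ induces a genuine inner product; let $\sK$ be the Hilbert space completion of $\sD/\sN$, and let $Q\colon \sD \to \sK$ be the composition of the quotient map with the inclusion into the completion. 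Then define
\[
T=\{\{\varphi, Q\varphi\} :\, \varphi \in \sD\} \in \bL(\sH,\sK).
\]
This $T$ is a (single-valued) linear operator with $\dom T=\sD$, and by construction $(T\varphi,T\psi)_\sK=(Q\varphi,Q\psi)_\sK=(\varphi,\psi)_+$ for all $\varphi,\psi \in \sD$. This settles existence; the only mild point to check is that $Q$ is well defined and linear, which is routine.

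\medskip

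\noindent\textbf{Uniqueness up to a partial isometry.}
Now suppose $T' \in \bL(\sH,\sK')$ is another representing map with $\dom T'=\sD$. Since $T$ is an operator and $\dom T=\sD=\dom T'$, one has $\ker T=\{\varphi \in \sD : \|T\varphi\|^2=(\varphi,\varphi)_+=0\}=\sN=\ker T'$, so in particular $T'$ is single-valued on $\ran T$ in the relevant sense: if $T\varphi_1=T\varphi_2$ then $\varphi_1-\varphi_2\in\sN$, hence $T'\varphi_1=T'\varphi_2$. Therefore the assignment $V_0\colon \ran T \to \ran T'$, $T\varphi \mapsto T'\varphi$, is a well-defined linear map, and it is isometric because
\[
\|V_0 (T\varphi)\|^2=\|T'\varphi\|^2=(\varphi,\varphi)_+=\|T\varphi\|^2.
\]
A densely defined isometry extends by continuity to an isometry $\cran T \to \cran T'$, which is onto $\cran T'$ since $\ran T'=V_0(\ran T)$ is dense in its closure; extend it by zero on $(\cran T)^\perp$ to obtain $V \in \bB(\sK,\sK')$, a partial isometry with initial space $\cran T$ and final space $\cran T'$. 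Finally $VT=\{\{\varphi, VT\varphi\} :\, \varphi \in \sD\}=\{\{\varphi, T'\varphi\} :\, \varphi \in \sD\}=T'$.

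\medskip

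\noindent\textbf{Main obstacle.}
There is no deep difficulty here; the construction is essentially the GNS idea. The one place requiring a little care is making precise the sense in which the representing maps may be genuine linear relations rather than operators: a priori the statement allows $T,T' \in \bL(\sH,\sK)$, but the identity $(\varphi,\psi)_+=(T\varphi,T\psi)$ for all $\varphi,\psi \in \dom T=\sD$ already forces $\mul T=\{0\}$ (take $\psi$ arbitrary and $\varphi$ with $\{0,\varphi'\}\in T$), so $T$ is automatically an operator, and likewise $T'$; this is what legitimizes the well-definedness argument for $V_0$ above. The rest is bookkeeping with closures and the standard extension of a densely defined isometry.
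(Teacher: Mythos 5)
Your proposal is correct and follows essentially the same route as the paper: quotient $\sD$ by the neutral subspace $\sN$, complete to get $\sK$, define $T\varphi=\varphi+\sN$, and obtain $V$ by extending the isometry $T\varphi\mapsto T'\varphi$ from $\ran T$ to $\cran T$ and setting it to zero on $(\ran T)^\perp$. Your extra remarks (that $\sN=\ker T=\ker T'$ makes $V_0$ well defined, and that the representing identity forces the maps to be single-valued) only make explicit points the paper leaves implicit.
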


\begin{proof}
Let $\sN$ be the set of neutral elements in $\sD$:
\[
 \sN=\big\{ \varphi \in \sD :\, (\varphi, \varphi)_+=0\,\big\}.
\]
Due to the Cauchy-Schwarz inequality the space $\sN$ is linear.
Hence, one may introduce an inner product on the quotient space
$\sD/\sN$ by
\[
 [\varphi+\sN, \psi+\sN]=(\varphi, \psi)_+, \quad \varphi, \psi \in \sD.
\]
The completion of this quotient space is indicated by $\sK$,
so that $\sK$ is a Hilbert space.
Denote the inner product on $\sK$ by $(\cdot, \cdot)_\sK$,
so that $(\varphi+\sN, \psi+\sN)_\sK= [\varphi+\sN, \psi+\sN]$
for $\varphi, \psi \in \sD$.
Next define the operator $T$ from $\sD \subset \sH$ to $\sK$ by
\[
  T \varphi= \varphi+\sN, \quad \varphi \in \sD.
\]
Then it follows that
\[
 (T\varphi, T \psi)_\sK=[\varphi +\sN, \psi+\sN]
 =(\varphi, \psi)_+, \quad \varphi, \psi \in \sD,
\]
which is the assertion of the lemma.

If $T' \in \bL(\sH, \sK')$, where $\sK'$ is a Hilbert space,
is another representing map  with $\dom T'=\sD$,
then
\[%begin{equation*}\label{grepq}
 (T' \varphi, T'\psi)=(\varphi, \psi)_+,
 \quad \varphi, \psi \in \sD=\dom T'.
\]%end{equation*}
Then the linear relation $V$ from $\sK$ to $\sK'$, defined by
\[
 \big\{ \{T \varphi, T' \varphi\} :\, \varphi \in \sD \big\},
\]
is an isometric operator from $\ran T$ onto $\ran T'$,
which can be extended as an isometric operator from $\cran T$ onto $\cran T'$,
such that $T'f=VTf$ holds for all $f\in\sD$.
 To get the desired partial isometry $V$ it remains to
continue the isometric map to $(\ran T)^\perp$ as a zero mapping.
This gives the desired result.
\end{proof}

Let $\sD \subset \sH$ be a linear subspace as in Lemma \ref{aux}.
A sequence $\varphi_n \in \sD$ is said to converge to $\varphi \in \sH$
in the sense of $\sD$, in notation $\varphi_n \to_\sD \varphi$, if
\[
 \varphi_n \to \varphi \quad \mbox{in} \quad \sH \quad \mbox{and} \quad \|\varphi_n-\varphi_m\|_+ \to 0.
\]
Then $\sD$ is called \textit{closable} if for any sequence $\varphi_n \in \sD$ one has
\[
 \varphi_n \to_\sD 0 \quad \Rightarrow \quad \|\varphi_n\|_+ \to 0,
\]
and, likewise,  $\sD$ is called \textit{closed} if for any sequence $\varphi_n \in \sD$ one has
\[
 \varphi_n \to_\sD \varphi \quad \Rightarrow \quad  \varphi \in \sD
 \quad \mbox{and} \quad \|\varphi_n -\varphi\|_+ \to 0.
\]
These definitions take a more familiar form in terms of
the representing map $T$ in Lemma \ref{aux}
One sees immediately for a sequence $\varphi_n \in \sD$ that
\[
\varphi_n \to_\sD \varphi \quad \Leftrightarrow \quad
\varphi_n \to \varphi \quad \mbox{in} \quad \sH \quad \mbox{and} \quad \|T(\varphi_n-\varphi_m)\| \to 0.
\]
Therefore, $\sD$ is \textit{closable} if and only if $T$ is closable,
and, likewise,  $\sD$ is closed if and only if $T$ is closed.

\medskip

An example of a representing map appears in the following construction that
was used in \cite{HS2023a}.
Let $A \in \bB(\sK)$ be a nonnegative contraction in a Hilbert space $\sK$.
The range space $\sA=\ran A^\half$, as a subspace of $\sK$,
is provided with the  semi-inner product
\begin{equation}\label{ipe}
 (A^\half h, A^\half k)_\sA=(\pi h, \pi k)_\sK, \quad h,k \in \sK,
\end{equation}
where $\pi$ is the orthogonal projection in $\sK$ onto
$\cran A^\half=(\ker A^\half)^\perp$. Then
 it is clear that the operator $T \in \bL(\sK, \sH)$
defined by
\[
 A^\half h   \mapsto \pi h, \quad h \in \sH,
\]
with $\dom T=\sA$, is actually a representing map as follows from \eqref{ipe}.

\section{Nondecreasing sequences of linear operators}\label{nondecop}

It will be shown that a sequence of linear operators,
that is nondecreasing in the sense of contractive domination,
as in Definition \ref{domi}, has a linear operator as limit.
The limit will be constructed by means of representing maps.
Moreover, it will be shown that closability and closedness of the
operators are preserved in the limit.

\begin{theorem}\label{Grijp1}
Let $T_n \in \bL(\sH, \sK_n)$, where $\sK_n$ are Hilbert spaces,
be a sequence of linear operators which satisfy
\begin{equation}\label{Grijpa}
T_m \prec_c T_n, \quad m \leq n.
\end{equation}
Then there exists a linear operator $T \in \bL(\sH, \sK)$,
where $\sK$ is a Hilbert space,  such that
\begin{equation}\label{Grijpb}
  \dom T= \left\{ \varphi \in
  \bigcap_{n \in \dN} \dom T_n :\, \sup_{n \in \dN} \|T_n \varphi\| < \infty \right\}
\end{equation}
and which satisfies
 \begin{equation}\label{Grijpc}
 T_n \prec_c T \quad \mbox{and} \quad
  \|T_n \varphi\| \nearrow \|T \varphi\|, \quad \varphi \in \dom T.
 \end{equation}
Moreover, the following statements hold:
\begin{enumerate} \def\labelenumi{\rm(\alph{enumi})}
\item if $T_n$ is closable for all $n \in \dN$, then $T$ is closable;
\item if $T_n$ is closed for all $n \in \dN$, then $T$ is closed.
\end{enumerate}
\end{theorem}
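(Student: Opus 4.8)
The plan is to realise the limit $T$ as a representing map in the sense of Lemma~\ref{aux}, applied to the candidate domain $\sD$ on the right-hand side of \eqref{Grijpb} equipped with the natural ``limit'' semi-inner product. First I would check that
\[
 \sD=\Big\{\varphi\in\bigcap_{n\in\dN}\dom T_n:\ \sup_{n\in\dN}\|T_n\varphi\|<\infty\Big\}
\]
is a linear subspace of $\sH$ (if $\varphi,\psi\in\sD$ then $\|T_n(\varphi+\alpha\psi)\|\le\|T_n\varphi\|+|\alpha|\,\|T_n\psi\|$ stays bounded in $n$). For $\varphi\in\sD$ the scalars $\|T_n\varphi\|^2$ form a nondecreasing bounded sequence, by the operator form of $T_m\prec_c T_n$ recalled in Section~\ref{reldom}, hence converge; by polarisation $(T_n\varphi,T_n\psi)_{\sK_n}$ then converges for all $\varphi,\psi\in\sD$, and
\[
 (\varphi,\psi)_+:=\lim_{n\to\infty}(T_n\varphi,T_n\psi)_{\sK_n}
\]
is a semi-inner product on $\sD$, since $(\varphi,\varphi)_+=\lim_n\|T_n\varphi\|^2\ge0$. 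Lemma~\ref{aux} then yields a Hilbert space $\sK$ and a representing map $T\in\bL(\sH,\sK)$ with $\dom T=\sD$ and $(T\varphi,T\psi)_\sK=(\varphi,\psi)_+$; this is \eqref{Grijpb}, and taking $\psi=\varphi$ gives $\|T\varphi\|^2=\sup_n\|T_n\varphi\|^2$, i.e.\ the monotone convergence $\|T_n\varphi\|\nearrow\|T\varphi\|$ in \eqref{Grijpc}.

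For the domination $T_n\prec_c T$: since $\|T_n\varphi\|\le\|T\varphi\|$ for $\varphi\in\sD$, the map $T\varphi\mapsto T_n\varphi$ is a well-defined contraction from $\ran T$ into $\sK_n$ (if $T\varphi=T\varphi'$ then $\|T_n(\varphi-\varphi')\|\le\|T(\varphi-\varphi')\|=0$), which extends by continuity to $\cran T$ and by zero on $(\ran T)^\perp$ to a contraction $C_n\in\bB(\sK,\sK_n)$. By construction $C_nT\varphi=T_n\varphi$ for all $\varphi\in\dom T$, i.e.\ $C_nT\subset T_n$, which is exactly $T_n\prec_c T$.

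For (a) and (b) I would use the remark following Lemma~\ref{aux}: $T$ is closable, respectively closed, precisely when $\sD$ is closable, respectively closed, with respect to $(\cdot,\cdot)_+$. For (a), let $\varphi_k\in\sD$ with $\varphi_k\to0$ in $\sH$ and $\|T(\varphi_k-\varphi_j)\|\to0$. For fixed $n$ the sequence $(T_n\varphi_k)_k$ is Cauchy in $\sK_n$ (as $\|T_n(\varphi_k-\varphi_j)\|\le\|T(\varphi_k-\varphi_j)\|$), and closability of $T_n$ together with $\varphi_k\to0$ forces $\|T_n\varphi_k\|\to0$ as $k\to\infty$. Given $\varepsilon>0$ pick $K$ with $\|T(\varphi_k-\varphi_j)\|\le\varepsilon$ for $k,j\ge K$; then for every $n$ and $k,j\ge K$,
\[
 \|T_n\varphi_k\|\le\|T_n(\varphi_k-\varphi_j)\|+\|T_n\varphi_j\|\le\varepsilon+\|T_n\varphi_j\|,
\]
and letting $j\to\infty$ gives $\|T_n\varphi_k\|\le\varepsilon$ for all $n$, whence $\|T\varphi_k\|=\sup_n\|T_n\varphi_k\|\le\varepsilon$ for $k\ge K$; so $\sD$, and hence $T$, is closable. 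For (b), let $\varphi_k\to\varphi$ in $\sH$ with $\|T(\varphi_k-\varphi_j)\|\to0$. As before each $(T_n\varphi_k)_k$ is Cauchy, and since $T_n$ is closed and $\{\varphi_k,T_n\varphi_k\}\in T_n$ converges, one gets $\varphi\in\dom T_n$, $T_n\varphi=\lim_kT_n\varphi_k$, and $\|T_n\varphi_k-T_n\varphi\|\to0$; moreover $\|T\varphi_k\|$ is bounded by some $M$, so $\|T_n\varphi\|=\lim_k\|T_n\varphi_k\|\le M$ for all $n$, hence $\varphi\in\sD=\dom T$. Applying the same $\varepsilon$-argument to $\|T_n(\varphi_k-\varphi)\|\le\|T_n(\varphi_k-\varphi_j)\|+\|T_n(\varphi_j-\varphi)\|$ and letting $j\to\infty$ gives $\|T(\varphi_k-\varphi)\|\le\varepsilon$ for $k\ge K$, so $\sD$, and hence $T$, is closed.

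The delicate point throughout is precisely this passage from pointwise-in-$n$ conclusions (e.g.\ $\|T_n\varphi_k\|\to0$ for each fixed $n$, or $T_n\varphi_k$ converging to $T_n\varphi$) to statements about $\|T\varphi_k\|=\sup_n\|T_n\varphi_k\|$, a supremum that is not attained and over which the convergence need not be uniform; the Cauchy hypothesis $\|T(\varphi_k-\varphi_j)\|\to0$ is exactly what licenses the interchange of limits in the displayed estimates. The only other care needed is the existence of the limit defining $(\cdot,\cdot)_+$, which is why one polarises onto the monotone bounded sequences $\|T_n(\cdot)\|^2$.
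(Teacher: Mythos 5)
Your proposal is correct, and the construction of the limit is the same as the paper's: the candidate domain $\sD$ in \eqref{Grijpb} is equipped with the semi-inner product obtained from the monotone limit of $\|T_n\varphi\|^2$, and Lemma \ref{aux} produces the representing map $T$; your explicit polarisation $(\varphi,\psi)_+=\lim_n(T_n\varphi,T_n\psi)_{\sK_n}$ in fact justifies more carefully than the paper does why the seminorm $\sup_n\|T_n\varphi\|$ comes from a semi-inner product, and your explicit construction of the contractions $C_n$ is exactly the content behind the operator characterization of $\prec_c$ that the paper invokes silently.

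The genuine divergence is in part (a). The paper argues through the Lebesgue decomposition: from $C_nT\subset T_n$ it deduces $C_nT^{**}\subset T_n^{**}$, hence $\mul T^{**}\subset\ker C_n$, hence $C_nT_{\rm reg}\subset T_n$, which gives $\|T_n\varphi\|\le\|T_{\rm reg}\varphi\|\le\|T\varphi\|$ and, after taking the supremum over $n$, the identity $\|T\varphi\|=\|T_{\rm reg}\varphi\|$, so that $T_{\rm sing}=0$. You instead give a direct $\varepsilon$-argument with the definition of closability, using the Cauchy hypothesis $\|T(\varphi_k-\varphi_j)\|\to0$ to pass uniformly in $n$ from the pointwise conclusions $\|T_n\varphi_k\|\to0$ to $\sup_n\|T_n\varphi_k\|\le\varepsilon$; you correctly identify this interchange of limits as the delicate point, and the estimate $\|T_n\varphi_k\|\le\varepsilon+\|T_n\varphi_j\|$ followed by $j\to\infty$ at fixed $n$ handles it. Both arguments are sound. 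Yours is more elementary and self-contained (it never mentions regular parts), which is a virtue here; the paper's buys the additional identity $T=T_{\rm reg}$ on the nose, which fits the regular-part machinery used throughout the rest of the paper. In part (b) you likewise rerun the $\varepsilon$-argument to get $T(\varphi_k-\varphi)\to0$ directly, whereas the paper, having shown $\varphi\in\dom T$, simply appeals to the closability established in (a); these are equivalent in substance.
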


\begin{proof}
Let $T_n$ be a sequence of operators that satisfies \eqref{Grijpa}.
Then it is seen by Cauchy's inequality that the right-hand side $\sD$
in \eqref{Grijpb} is a linear space.
Next the existence of the operator $T$ will be shown.
For each $\varphi \in \sD$ define
\[
 \|\varphi\|_+= \sup_{n \in \dN} \|T_n \varphi\|.
\]
Then $\|\cdot \|_+$ is clearly a well defined seminorm on $\sD$
and let $(\cdot, \cdot)_+$ be the corresponding semi-inner product.
By Lemma \ref{aux} there exists a linear operator $T$ defined on
$\dom T=\sD \subset \sH$ to a Hilbert space $\sK$ such that
\[
 (\varphi, \psi)_+=(T\varphi, T \psi), \quad \varphi \in \sD.
\]
This shows the assertion in \eqref{Grijpc}.

(a) Assume that $T_n$, $n \in \dN$, is closable. To show that $T$ is closable,
it suffices to show that $T=T_{\rm reg}$.
By \eqref{Grijpc} one has
\[
T_n \prec_c T.
\]
Hence  there exist contractions $C_n\in\bB(\sK,\sK_{n})$,
such that $C_n T \subset T_n$ for all $n\in\dN$.
This implies that
\[%begin{equation}\label{CT**}
   C_n T^{**}     \subset T_n^{**};
\]%end{equation}
see \eqref{dom1}.
In particular, if $\{0,\varphi\} \in T^{**}$, then $\{0,C_{n} \varphi\} \in T_{n}^{**}$,
so that $C_{n}\varphi=0$.
Thus one concludes that $\mul T^{**}\subset \ker C_n$.
Let $P$ be the orthogonal projection from $\sK$ onto $\mul
T^{**}$, then $C_nP=0$.  By means of the Lebesgue decomposition
$T=(I-P)T+PT$, this leads to
\[
 C_n T_{\rm reg}=C_n(I-P)T=C_n[(I-P)T+PT]=C_n T \subset T_n.
\]
Hence, $C_n T_{\rm reg} \subset T_n$ for all $n\in\dN$ and thus
\begin{equation}\label{ass2}
 \|T_n \varphi \|=\|C_n T_{\rm reg} \varphi \|
 \leq \|T_{\rm{reg}} \varphi \| \leq \|T \varphi \|, \quad \varphi \in \dom T.
\end{equation}
Taking the supremum over $n \in \dN$ in \eqref{ass2}
and combining with \eqref{Grijpc} gives
\[
 \|T \varphi \|=\|T_{\rm{reg}} \varphi \|, \quad \varphi \in\dom T.
\]
This implies that $T_{\rm{sing}}=0$ and hence $T$ is closable.

(b) Assume that $T_n$, $n \in \dN$, is closed.
To show that $T$ is closed, let $\varphi_n$
be a sequence in $\dom T$ such that
\begin{equation}\label{gri}
\varphi_n \to \varphi \, \mbox{ in } \,\sH
\quad \mbox{and} \quad
T(\varphi_n-\varphi_m) \to 0 \,\mbox{  in } \, \sK.
\end{equation}
Due to \eqref{Grijpc} one sees that $T_k(\varphi_n-\varphi_m) \to 0$.
Since for each $k \in \dN$ the operator $T_k$ is closed
one obtains that $\varphi \in \dom T_k$
and $T_k(\varphi_n-\varphi) \to 0$ as $n \to \infty$.
In particular, $\varphi \in \bigcap_{n \in \dN} \dom T_n$.
In order to verify that $\varphi \in \dom T$,
first observe that the inequality
\[
 \left| \|T\varphi_n\|-\|T \varphi_m\| \right| \leq \|T(\varphi_n-\varphi_m)\|,
\]
implies, via \eqref{gri}, that $\sup_{m \in \dN} \|T \varphi_m\| <\infty$.
Now it follows from $T_n \varphi_m \to T_n \varphi$ and \eqref{Grijpc} that
\[
  \|T_n \varphi \|
   = \lim_{m \to \infty} \|T_n \varphi_m\|
  \leq \lim_{m \to \infty} \|T \varphi_m\|
  \leq \sup_{m \in \dN}   \|T \varphi_m\| < \infty.
\]
Since this holds for all $n\in \dN$, one concludes that
$\sup_{n \in \dN} \|T_n \varphi \| < \infty$.
Therefore, $\varphi \in \dom T$. Since by (a) the operator $T$ is closable,
it now follows from \eqref{gri}  that $T$ is closed.
\end{proof}

The existence of the limit in Theorem \ref{GrijpA} has been established; however
it is clear that there is no uniqueness. In fact,  this question has been already
addressed in Lemma \ref{aux}. The corollary below is easily verified directly.

\begin{corollary}%\label{Grijp2a}
Assume the conditions from Theorem {\rm \ref{Grijp1}}
and let $T \in \bL(\sH, \sK)$ be the limit. If $T' \in \bL(\sH, \sK')$,
where $\sK'$ is a Hilbert space,
is another limit with $\dom T'=\dom T$,
then there exists a partial isometry $V \in \bB(\sK, \sK')$
with initial space $\cran T$ and final space $\cran T'$,
such that $T'=VT$.
\end{corollary}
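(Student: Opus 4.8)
The plan is to reduce the statement directly to the uniqueness clause of Lemma \ref{aux}. The key observation is that both $T$ and $T'$ are representing maps for one and the same semi-inner product on the common domain $\sD := \dom T = \dom T'$, namely the space appearing in \eqref{Grijpb} equipped with the seminorm $\|\varphi\|_+ = \sup_{n\in\dN}\|T_n\varphi\|$.

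First I would recall that, by the construction in the proof of Theorem \ref{Grijp1}, the operator $T$ satisfies $\|T\varphi\| = \|\varphi\|_+$ for every $\varphi \in \sD$, so that $(T\varphi, T\psi) = (\varphi,\psi)_+$ for all $\varphi,\psi\in\sD$ by polarization. Next I would check that $T'$ enjoys the same property. Since $T'$ is, by hypothesis, another limit of the sequence $T_n$ with $\dom T' = \sD$, it satisfies the analogue of \eqref{Grijpc}: one has $T_n \prec_c T'$ and $\|T_n\varphi\| \nearrow \|T'\varphi\|$ for all $\varphi \in \sD$. The monotone (not merely bounded) convergence forces $\|T'\varphi\| = \sup_{n\in\dN}\|T_n\varphi\| = \|\varphi\|_+$, and polarization again gives $(T'\varphi, T'\psi) = (\varphi,\psi)_+$ for $\varphi,\psi\in\sD$.

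With this in hand, $T$ and $T'$ are both representing maps of the semi-inner product space $(\sD, (\cdot,\cdot)_+)$ in the sense of Lemma \ref{aux}, with $\dom T = \dom T' = \sD$. The uniqueness part of that lemma then yields a partial isometry $V \in \bB(\sK,\sK')$ with initial space $\cran T$ and final space $\cran T'$ such that $T' = VT$, which is exactly the assertion. I do not anticipate a genuine obstacle here; the only point requiring a word of care is the verification that the second limit $T'$ reproduces the same seminorm $\|\cdot\|_+$ on $\sD$ — this is precisely where the monotone convergence $\|T_n\varphi\| \nearrow \|T'\varphi\|$ is used, as it pins down $\|T'\varphi\|$ as the supremum. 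Everything else is an immediate appeal to Lemma \ref{aux}.
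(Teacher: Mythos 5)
Your proposal is correct and follows exactly the route the paper intends: the paper itself remarks that the uniqueness question "has been already addressed in Lemma \ref{aux}" and that the corollary "is easily verified directly," which is precisely your reduction of both limits to representing maps of the same semi-inner product $\|\varphi\|_+=\sup_n\|T_n\varphi\|$ on the common domain, followed by the uniqueness clause of that lemma. Your added care in noting that the monotone convergence $\|T_n\varphi\|\nearrow\|T'\varphi\|$ pins down $\|T'\varphi\|$ as the supremum is exactly the "direct verification" the authors have in mind.
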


The following simple result is that an operator that
dominates the sequence also dominates the limit.
This fact will have important consequences.

\begin{corollary}\label{Grijp2}
Assume the conditions from Theorem {\rm \ref{Grijp1}}
and let $T' \in \bL(\sH, \sK')$, where $\sK'$ is a Hilbert space,
be a linear operator.  Then
\[%begin{equation}\label{ggrijpd}
 T_n \prec_c T', \quad n \in \dN \quad \Rightarrow \quad T \prec_c T'.
\]%end{equation}
\end{corollary}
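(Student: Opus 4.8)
The plan is to reduce everything to the scalar characterization of contractive domination for operators, recalled in Section \ref{reldom}: for linear operators $A \prec_c B$ holds if and only if $\dom B \subset \dom A$ and $\|Af\| \le \|Bf\|$ for all $f \in \dom B$. Since the $T_n$ are operators by hypothesis, $T$ is an operator by Theorem \ref{Grijp1}, and $T'$ is an operator by the assumption of the corollary, this characterization is available for each of the dominations that occur.

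First I would unravel the hypothesis $T_n \prec_c T'$, $n \in \dN$. By the above it says precisely that $\dom T' \subset \dom T_n$ and $\|T_n f\| \le \|T' f\|$ for all $f \in \dom T'$ and all $n \in \dN$. Fixing $f \in \dom T'$, this gives $f \in \bigcap_{n \in \dN} \dom T_n$ together with $\sup_{n \in \dN} \|T_n f\| \le \|T' f\| < \infty$. Comparing with the explicit description \eqref{Grijpb} of $\dom T$ from Theorem \ref{Grijp1}, one concludes $f \in \dom T$; hence $\dom T' \subset \dom T$.

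Next I would estimate the norm. For $f \in \dom T'$, the monotone convergence $\|T_n f\| \nearrow \|Tf\|$ from \eqref{Grijpc} gives $\|Tf\| = \sup_{n \in \dN} \|T_n f\| \le \|T' f\|$. Together with the inclusion $\dom T' \subset \dom T$ just established, the operator characterization of $\prec_c$ yields $T \prec_c T'$, which is the assertion.

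I do not expect a genuine obstacle: the statement is a direct consequence of the defining properties \eqref{Grijpb} and \eqref{Grijpc} of the limit operator, once one passes to the norm description of $\prec_c$. The only point needing a moment's care is that this norm description requires all relations in sight to be operators, which is exactly why the corollary is stated for an operator $T'$. One could instead try to build a single contraction $C \in \bB(\sK',\sK)$ with $CT' \subset T$ directly from the contractions $C_n \in \bB(\sK',\sK_n)$ realizing $C_n T' \subset T_n$, but this is less transparent than—and subsumed by—the norm argument above.
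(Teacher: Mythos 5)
Your argument is correct and is essentially identical to the paper's own proof: both unwind $T_n \prec_c T'$ via the operator characterization of contractive domination, use \eqref{Grijpb} to conclude $\dom T' \subset \dom T$, and use \eqref{Grijpc} to get $\|T\varphi\| = \sup_n \|T_n\varphi\| \le \|T'\varphi\|$. No issues.
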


\begin{proof}
 The inequality $T_n \prec_c T'$ implies that  $\dom T' \subset \dom T_n$ and
$\|T_n \varphi \| \leq \| T' \varphi\|$ for $\varphi \in \dom T'$.
Since this holds for all $n \in \dN$, one sees that
\[
 \dom T' \subset \dom T \quad \mbox{and} \quad
 \|T \varphi\|=\sup_{n \in \dN} \|T_n \varphi\| \leq \|T' \varphi\|, \quad \varphi \in \dom T',
\]
in other words $T \prec T'$.
\end{proof}

\section{Nondecreasing sequences of closable operators}\label{nondecclos}

It is a consequence of  Theorem \ref{Grijp1} that
a sequence of closable linear operators  which satisfy \eqref{Grijpa}
 has a closable limit.  The description of the limit
of the closures is of interest.

\begin{proposition}\label{Grijp5}
Let $T_n \in \bL(\sH, \sK_n)$, where $\sK_n$ are Hilbert spaces,
be a sequence of linear operators
for which \eqref{Grijpa} holds
and assume that $T_n$, $n \in \dN$,  is closable.
Let $T$ be the closable limit of $T_n$  in \eqref{Grijpb} and \eqref{Grijpc}.
Then the  closures $T_n^{**} \in \bL(\sH, \sK_n)$ of $T_n$ satisfy
 \begin{equation}\label{GrijpA}
 T_m^{**} \prec_{c} T_n^{**},
 \quad m \leq n, \quad \mbox{and} \quad T_n^{**} \prec_c T^{**}.
\end{equation}
Consequently, there exists a closed linear operator
$S \in \bL(\sH, \sK_{\rm c})$, where $\sK_{\rm c}$ is a Hilbert space,
such that
\begin{equation}\label{GrijpBS}
    \dom S=\left\{ \varphi \in \bigcap_{n \in \dN} \dom T_n^{**} :\,
  \sup_{n \in \dN} \| T_n^{**} \varphi\| < \infty \right\}
\end{equation}
 and which satisfies
\begin{equation}\label{GrijpCS}
 T_n^{**} \prec_c S \prec_c T^{**}
 \quad \mbox{and} \quad
  \|T_n^{**} \varphi\| \nearrow \|S \varphi \|, \quad \varphi \in \dom S.
\end{equation}
In fact, $\dom T^{**} \subset \dom S$, while
$\|S \varphi\|=\|T^{**}\varphi\|$ for all $\varphi \in \dom T^{**}$.
\end{proposition}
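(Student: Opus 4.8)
The statement to prove is Proposition~\ref{Grijp5}: given that the operators $T_n$ are closable and satisfy $T_m\prec_c T_n$ for $m\le n$, the closures $T_n^{**}$ again form a nondecreasing sequence in the sense of contractive domination, are all dominated by $T^{**}$, and hence by Theorem~\ref{Grijp1} admit a closed limit $S$ with the stated domain and monotone convergence; moreover $\dom T^{**}\subset\dom S$ and $\|S\varphi\|=\|T^{**}\varphi\|$ on $\dom T^{**}$. The plan is to derive the three displayed conclusions in order: first \eqref{GrijpA}, then the existence and properties of $S$ via the already-proved Theorem~\ref{Grijp1}, and finally the comparison of $S$ with $T^{**}$.

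First I would establish \eqref{GrijpA}. The inclusion $T_m^{**}\prec_c T_n^{**}$ for $m\le n$ is immediate from \eqref{Grijpa} together with the second inclusion in \eqref{dom1}: if $C_{mn}T_n\subset T_m$ with $C_{mn}$ a contraction, then $C_{mn}T_n^{**}\subset T_m^{**}$, which is exactly $T_m^{**}\prec_c T_n^{**}$. Likewise, from $T_n\prec_c T$ (part of \eqref{Grijpc}) one gets $T_n^{**}\prec_c T^{**}$ by the same application of \eqref{dom1}. Since each $T_n$ is closable, $T_n^{**}$ is a closed \emph{operator}; and since $T$ is closable by Theorem~\ref{Grijp1}(a), $T^{**}$ is a closed operator as well. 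Thus \eqref{GrijpA} holds and, crucially, all relations involved are operators, so Theorem~\ref{Grijp1} is applicable to the sequence $T_n^{**}$.

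Next I would invoke Theorem~\ref{Grijp1} directly for the nondecreasing sequence of (closed, hence in particular) operators $T_n^{**}$. This yields a linear operator $S\in\bL(\sH,\sK_{\rm c})$ whose domain is given by \eqref{GrijpBS} (the specialization of \eqref{Grijpb}) and which satisfies $T_n^{**}\prec_c S$ with $\|T_n^{**}\varphi\|\nearrow\|S\varphi\|$ on $\dom S$, which is the convergence part of \eqref{GrijpCS}; moreover, by Theorem~\ref{Grijp1}(b), since each $T_n^{**}$ is closed, $S$ is closed. It remains only to insert $T^{**}$ on the right: since $T_n^{**}\prec_c T^{**}$ for all $n$ by \eqref{GrijpA}, and $T^{**}$ is a linear operator, Corollary~\ref{Grijp2} (applied to the sequence $T_n^{**}$ and the dominating operator $T^{**}$) gives $S\prec_c T^{**}$. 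This completes $T_n^{**}\prec_c S\prec_c T^{**}$ in \eqref{GrijpCS}.

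Finally I would prove the last sentence. From $S\prec_c T^{**}$, since both are operators, the operator characterization of contractive domination gives $\dom T^{**}\subset\dom S$ and $\|S\varphi\|\le\|T^{**}\varphi\|$ for $\varphi\in\dom T^{**}$. For the reverse inequality on $\dom T^{**}$, note that $T\subset T^{**}$, so $\dom T\subset\dom T^{**}\subset\dom S$, and for $\varphi\in\dom T$ we have $\|S\varphi\|\ge\sup_n\|T_n^{**}\varphi\|\ge\sup_n\|T_n\varphi\|=\|T\varphi\|$ using $T_n\subset T_n^{**}$ and \eqref{Grijpc}. A standard limiting argument then extends the equality from $\dom T$ to $\dom T^{**}$: given $\varphi\in\dom T^{**}$, pick $\varphi_j\in\dom T$ with $\varphi_j\to\varphi$ and $T\varphi_j\to T^{**}\varphi$; since $T_n^{**}\prec_c S$ with contraction $C_n$, one has $C_n S=C_n S_{\rm reg}=\cdots$ — more directly, $\|S(\varphi_j-\varphi_k)\|\le\|T^{**}(\varphi_j-\varphi_k)\|\to 0$, so $S\varphi_j$ is Cauchy; as $S$ is closed and $\varphi_j\to\varphi$ this forces $\varphi\in\dom S$ (already known) and $S\varphi_j\to S\varphi$, whence $\|S\varphi\|=\lim_j\|S\varphi_j\|=\lim_j\|T\varphi_j\|=\|T^{**}\varphi\|$. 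Thus $\|S\varphi\|=\|T^{**}\varphi\|$ for all $\varphi\in\dom T^{**}$.

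The only mildly delicate point is the very last limiting argument identifying $\|S\varphi\|$ with $\|T^{**}\varphi\|$ off $\dom T$; everything else is a bookkeeping assembly of \eqref{dom1}, Theorem~\ref{Grijp1}, and Corollary~\ref{Grijp2}. I expect no real obstacle, since the closedness of $S$ and the contractive bound $\|S\psi\|\le\|T^{**}\psi\|$ make the Cauchy/closedness step routine.
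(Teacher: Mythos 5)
Your proposal is correct and follows essentially the same route as the paper: derive \eqref{GrijpA} from \eqref{dom1}, apply Theorem~\ref{Grijp1} to the closed operators $T_n^{**}$ to obtain the closed limit $S$, get $S \prec_c T^{**}$ from Corollary~\ref{Grijp2}, and establish $\|S\varphi\|=\|T\varphi\|$ on $\dom T$ before extending to $\dom T^{**}$. The only (immaterial) divergence is in that final extension step, where you use a Cauchy-sequence argument via the closedness of $S$ while the paper argues that the contraction $C$ with $CT^{**}\subset S$ is isometric on $\cran T$; both are valid, and your half-started detour through $C_nS_{\rm reg}$ should simply be deleted.
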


\begin{proof}
The sequence $T_n$ is assumed to satisfy \eqref{Grijpa},
thus it follows that
$T_m^{**} \prec_c T_n^{**}$ for  $m \leq n$, by \eqref{dom1}.
Moreover, by Theorem \ref{Grijp1} one has $T_n \prec_c T$,
so that also $T_n^{**} \prec_c T^{**}$ by \eqref{dom1}.
Hence \eqref{GrijpA} holds and, in particular, Theorem \ref{Grijp1} may be applied
to the sequence of closed operators $T_n^{**}$.

Recall from Theorem \ref{Grijp1} that the right-hand side
in \eqref{GrijpBS} is a linear space.
Moreover, by the same theorem there exists a closed linear operator $S$
defined on $\dom S$ in \eqref{GrijpBS} for which \eqref{GrijpCS} holds;
observe that $S \prec_c T^{**}$ by Corollary \ref{Grijp2}.

Now it follows from \eqref{Grijpc} and \eqref{GrijpCS}
that $\|T\varphi\|=\|S \varphi\|$ for all $\varphi \in \dom T$.
Here the operator $S$ is closed and $T$ is closable, and
$S \prec_c T^{**}$ means that $CT^{**}\subset S$ for some
contraction $C \in \mathbf{B}(\sK,\sK_{\rm c})$.
One concludes that $\|S \varphi\|=\|CT^{**}\varphi\|=\|T^{**}\varphi\|$
holds in fact for all $\varphi \in \dom T^{**}$.
\end{proof}

A special case of Theorem \ref{Grijp1}, where all $T_n$ are bounded everywhere
defined operators, is worth mentioning separately.

\begin{corollary}\label{new1}
Let $T_n\in \bB(\sH, \sK_{n})$, where $\sK_n$ are Hilbert spaces,
such that
\[%begin{equation}\label{bd1}
 \|T_m \varphi \| \le \|T_n \varphi \|, \quad  \varphi \in \sH, \quad m \leq n.
\]%end{equation}
Then there exists a closed linear operator $T \in \bL(\sH, \sK_{\rm c})$,
where $\sK_{\rm c}$ a Hilbert space,  such that
\begin{equation}\label{bbd1}
  \dom T= \left\{ \varphi \in \sH:\,
  \sup_{n \in \dN} \|T_n \varphi\| < \infty \right\}
\end{equation}
and which satisfies
\begin{equation}\label{bbd2}
  T_n \prec_c T \quad \mbox{and} \quad
  \|T_{n} \varphi \| \nearrow \|T \varphi\|, \quad \varphi \in \dom T.
\end{equation}
\end{corollary}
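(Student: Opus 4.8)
The plan is to derive Corollary \ref{new1} directly from Proposition \ref{Grijp5} (equivalently, from Theorem \ref{Grijp1} together with Corollary \ref{Grijp2}) by observing that a bounded everywhere defined operator $T_n \in \bB(\sH,\sK_n)$ is automatically closed, with $\dom T_n = \sH$, and that the contractive domination hypothesis $\|T_m\varphi\| \le \|T_n\varphi\|$ for all $\varphi \in \sH$ is, by the operator characterization of $\prec_c$ recalled in Section \ref{reldom}, precisely the statement $T_m \prec_c T_n$ for $m \le n$. Thus the sequence $T_n$ falls under the hypotheses of Theorem \ref{Grijp1}.

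First I would invoke Theorem \ref{Grijp1} to produce a linear operator $T \in \bL(\sH,\sK)$ with domain as in \eqref{Grijpb} and satisfying \eqref{Grijpc}. Since each $T_n$ is closed (being bounded and everywhere defined), part (b) of that theorem gives that $T$ is closed; rename the target space $\sK_{\rm c}$. Because $\dom T_n = \sH$ for every $n$, the intersection $\bigcap_{n\in\dN}\dom T_n$ equals $\sH$, so the domain description \eqref{Grijpb} collapses to the form \eqref{bbd1}. The relations \eqref{Grijpc}, namely $T_n \prec_c T$ and $\|T_n\varphi\| \nearrow \|T\varphi\|$ for $\varphi \in \dom T$, are exactly \eqref{bbd2}. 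This already yields the statement.

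Since there is essentially no obstacle here, I would instead offer a short self-contained derivation as an alternative, to make the corollary transparent: for $\varphi$ in the set on the right of \eqref{bbd1}, the numbers $\|T_n\varphi\|$ are nondecreasing and bounded, hence convergent; one then checks that $(\varphi,\psi)_+ := \lim_n (T_n\varphi, T_n\psi)_{\sK_n}$ is a well-defined semi-inner product on this linear subspace of $\sH$, applies Lemma \ref{aux} to obtain a representing map $T$, and notes closedness via the closedness argument of Theorem \ref{Grijp1}(b). But the cleanest route is simply to quote Theorem \ref{Grijp1}. The only point requiring a word of care is the identification of the hypothesis with \eqref{Grijpa}: one must recall that for operators $A \prec_c B \iff \dom B \subset \dom A$ and $\|Af\|\le\|Bf\|$ on $\dom B$, which here holds trivially since all domains are $\sH$.

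I do not expect any genuine difficulty; the result is a specialization, and the ``main obstacle'' is merely bookkeeping — confirming that the domain formula simplifies and that closedness is inherited. A full proof would read:

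\begin{proof}
Since each $T_n \in \bB(\sH,\sK_n)$ is a bounded everywhere defined, hence closed, linear operator with $\dom T_n=\sH$, the hypothesis $\|T_m\varphi\|\le\|T_n\varphi\|$ for all $\varphi\in\sH$ and $m\le n$ is, by the operator characterization of contractive domination in Section \ref{reldom}, equivalent to $T_m \prec_c T_n$ for $m\le n$, i.e.\ \eqref{Grijpa} holds. Theorem \ref{Grijp1} then provides a linear operator $T \in \bL(\sH,\sK_{\rm c})$ satisfying \eqref{Grijpc}, which is \eqref{bbd2}; moreover $T$ is closed by Theorem \ref{Grijp1}(b). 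Finally, since $\dom T_n=\sH$ for all $n$, one has $\bigcap_{n\in\dN}\dom T_n=\sH$, so the domain description \eqref{Grijpb} reduces to \eqref{bbd1}.
\end{proof}
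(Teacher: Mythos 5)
Your proposal is correct and is essentially identical to the paper's own proof: the authors likewise observe that $\bigcap_{n}\dom T_n=\sH$, apply Theorem \ref{Grijp1} to obtain \eqref{bbd1} and \eqref{bbd2}, and conclude closedness of $T$ from part (b) since each $T_n\in\bB(\sH,\sK_n)$ is closed. No further comment is needed.
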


\begin{proof}
This is just an application of Theorem \ref{Grijp1},
as $\bigcap_{n=1}^\infty \dom T_n=\sH$.
Hence there exists a linear operator $T \in \bL(\sH, \sK)$
for which \eqref{bbd1} and \eqref{bbd2} hold.
Since $T_n\in \bB(\sH, \sK_{n})$ one observes that $T_n$,
$n \in \dN$, is closed, which implies that $T$ is closed.
\end{proof}

\begin{remark}
 If in Corollary \ref{new1} one has $\sup_{n \in \dN} \|T_n\| < \infty$,
then $\dom T=\sH$ and $T \in \bB(\sH,\sK)$
by the closed graph theorem.
However, if  $\sup_{n \in \dN} \|T_n\|=\infty$,
then by the uniform boundedness principle
there is an element $\varphi \in \sH$ for which
$\sup_{n \in \dN} \|T_n \varphi\|=\infty$
 and $\dom T$ is a proper subset of $\sH$.
 Note that $\dom T$ is closed
if and only if $T$ is a bounded operator.
 \end{remark}

\section{Nondecreasing sequences of linear relations}\label{nondecrel}

In this section the emphasis will be on nondecreasing sequences of
linear relations in the general case, i.e., the relations are not necessarily
operators or not necessarily closed.
However, also the regular parts and the closures form
nondecreasing sequences. In particular, one may apply
Theorem \ref{nieuw0}, which
leads to a connection with the monotonicity principle in Theorem \ref{mopr}.

\medskip

Let $T_n \in \bL(\sH, \sK_n)$, where $\sK_n$ are Hilbert spaces,
be a sequence of linear relations which satisfy
\begin{equation}\label{vasa0}
T_m \prec_c T_n  \quad m \leq n.
\end{equation}
Observe that the regular parts $T_{n, {\rm reg}} \in \bL(\sH, \sK_n)$
of the relations $T_n$ are closable operators which satisfy
\begin{equation}\label{GrijpAA}
T_{m, {\rm reg}} \prec_c T_{n, {\rm reg}},
 \quad m \leq n,
\end{equation}
see Lemma \ref{regg}.
Hence, by Theorem \ref{Grijp1}, there exists a closable linear operator
$T_{\rm r} \in \bL(\sH, \sK_{\rm r})$, where $\sK_{\rm r}$ is a Hilbert space,
such that
\begin{equation}\label{GrijpBB}
  \dom T_{\rm r}
  =\left\{ \varphi \in \bigcap_{n \in \dN} \dom T_n :\,
  \sup_{n \in \dN} \|T_{n, {\rm reg}} \varphi\| < \infty \right\}
\end{equation}
and which satisfies
\begin{equation}\label{GrijpCC}
 T_{n, {\rm reg}} \prec_c  T_{\rm r}
 \quad \mbox{and} \quad
 \|T_{n, {\rm reg}} \varphi\| \nearrow \| T_{\rm r} \varphi \|,
\quad \varphi \in \dom T_{\rm r}.
\end{equation}
Moreover, the closures $(T_{n, {\rm reg}})^{**} \in \bL(\sH, \sK_n)$
are closed linear operators which satisfy
\begin{equation}\label{GrijpAAc}
(T_{m, {\rm reg}})^{**} \prec_c (T_{n, {\rm reg}})^{**},
 \quad m \leq n, \quad \mbox{and} \quad (T_{n, {\rm reg}})^{**} \prec_c (T_{\rm r})^{**},
\end{equation}
see Proposition \ref{Grijp5}. By the same proposition,
there exists a closed linear operator
$S_{\rm r} \in \bL(\sH, \sK_{\rm c})$, where $\sK_{\rm c}$ is a Hilbert space,
 such that
 \begin{equation}\label{GrijpBB1}
   \dom S_{\rm r}
  =\left\{ \varphi \in \bigcap_{n \in \dN} \dom T_n^{**} :\,
  \sup_{n \in \dN} \|(T_{n, {\rm reg}})^{**} \varphi\|
  < \infty \right\}
\end{equation}
and which satisfies
\begin{equation}\label{GrijpCC1}
 (T_{n, {\rm reg}})^{**} \prec_c  S_{\rm r} \prec_c(T_{\rm r})^{**}
 \quad \mbox{and} \quad
 \|(T_{n, {\rm reg}})^{**} \varphi\| \nearrow \| S_{\rm r} \varphi \|,
\quad \varphi \in \dom S_{\rm r}.
\end{equation}
In fact, $\dom (T_{\rm r})^{**} \subset \dom S_{\rm r}$, while
$\|S_{\rm r} \varphi\|=\|(T_{\rm r})^{**}\varphi\|$ for $\varphi \in \dom (T_{\rm r})^{**}$.

\medskip

It follows from \eqref{vasa0} that the closures $T_n^{**} \in \bL(\sH, \sK_n)$
of $T_n$ are closed relations  which satisfy
\begin{equation}\label{vasa}
T_m^{**} \prec_c T_n^{**}  \quad m \leq n,
\end{equation}
see \eqref{dom1}.
Of course, by Lemma \ref{regg} also the regular parts of $T_n$ satisfy such an inequality;
but this gives again \eqref{GrijpAAc}, due to the identity
\[
 ((T_n)^{**})_{\rm reg}=(T_{n, {\rm reg}})^{**},
\]
see \eqref{vier}.
Since the relation $T_n^{**} \in \bL(\sH, \sK_n)$ is closed, it follows that the product
\[
H_n=T_n^*T_n^{**} \in \bL(\sH)
\]
is a nonnegative selfadjoint relation and
by Theorem \ref{nieuw0} one sees that \eqref{vasa} implies
\[
 H_m \leq  H_n, \quad m \leq n.
\]
Thus according to Theorem \ref{mopr}
there exists a nonnegative selfadjoint relation
$H_\infty \in \bL(\sH)$  which is the limit of the
relations $H_n$ in the strong resolvent sense or, equivalently,
in the strong graph sense.

\begin{theorem}\label{main}
Let $T_n \in \bL(\sH, \sK_n)$, where $\sK_n$ are Hilbert spaces,
be a sequence of  linear relations which satisfy \eqref{vasa0}.
Let $H_\infty \in \bL(\sH)$ be the nonnegative selfadjoint relation,  which is the limit
of the nondecreasing sequence of nonnegative selfadjoint
relations $T_n^* T_n^{**} \in \bL(\sH)$. Then $H_\infty$ satisfies
 \begin{equation}\label{vash}
\dom (H_\infty)^\half=\left\{ \varphi \in \bigcap_{n \in \dN} \dom T_n^{**} :\,
  \sup_{n \in \dN} \| (T_{n, {\rm reg}})^{**} \varphi\| < \infty \right\}
\end{equation}
and, furthermore,
\begin{equation}\label{vasi}
(T_{n, {\rm reg}})^{**} \varphi\| \nearrow  \| (H_{\infty, {\rm op}})^\half \varphi\|,
\quad \varphi \in \dom (H_\infty)^\half.
\end{equation}
Moreover, the limit $S_{\rm r} \in \bL(\sH, \sK_{\rm c})$
of the sequence $(T_{n, {\rm reg}})^{**} \in \bL(\sH, \sK_n)$
in \eqref{GrijpBB1} and \eqref{GrijpCC1} satisfies
\begin{equation}\label{vasj-}
\| (T_{n, {\rm reg}})^{**} \varphi\| \nearrow \|S_{\rm r} \varphi \|,
\quad \varphi \in \dom S_{\rm r}=\dom (H_\infty)^\half.
\end{equation}
Consequently, there exists a partial isometry $U \in \bL(\sK_{\rm c}, \sH)$ such that
\begin{equation}\label{vasj}
 (H_{\infty, {\rm op}})^\half=U S_{\rm r} \quad \mbox{and} \quad
 H_{\infty, {\rm op}} = (S_{\rm r})^* S_{\rm r}.
\end{equation}
\end{theorem}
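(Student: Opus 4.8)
The plan is to assemble the statement from three ingredients already available: the monotonicity principle (Theorem \ref{mopr}) applied to the relations $H_n=T_n^*T_n^{**}$, the operator convergence result of Theorem \ref{Grijp1} and Proposition \ref{Grijp5} applied to the closed operators $(T_{n,{\rm reg}})^{**}$, and the square-root identities of Section \ref{app} (in particular that $(H_{n,{\rm reg}})^{\half}=(H_n^{\half})_{\rm reg}=U_n (T_{n,{\rm reg}})^{**}$ for suitable partial isometries $U_n$, as in the proof of Theorem \ref{nieuw0}). The first step is to observe that, by Lemma \ref{regs+} as used in Theorem \ref{nieuw0}, for each $n$ there is a partial isometry so that $\dom H_n^{\half}=\dom T_n^{**}$ and $\|(H_{n,{\rm reg}})^{\half}\varphi\|=\|(T_{n,{\rm reg}})^{**}\varphi\|$ for $\varphi\in\dom T_n^{**}$. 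Substituting this into \eqref{vasf} and \eqref{vasg} of Theorem \ref{mopr} immediately yields \eqref{vash} and \eqref{vasi}, since $H_{\infty,{\rm reg}}=H_{\infty,{\rm op}}$ ($H_\infty$ being selfadjoint, hence its regular part is its operator part).

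Next I would identify the domain of $S_{\rm r}$. By Proposition \ref{Grijp5}, $\dom S_{\rm r}$ is exactly the set in \eqref{GrijpBB1}, namely $\{\varphi\in\bigcap_n\dom T_n^{**}:\sup_n\|(T_{n,{\rm reg}})^{**}\varphi\|<\infty\}$, and by the previous paragraph this is precisely $\dom(H_\infty)^{\half}$ as described in \eqref{vash}; this gives the equality of domains in \eqref{vasj-}. The monotone convergence $\|(T_{n,{\rm reg}})^{**}\varphi\|\nearrow\|S_{\rm r}\varphi\|$ on this common domain is just \eqref{GrijpCC1} restated, so \eqref{vasj-} follows. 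Comparing \eqref{vasi} with \eqref{vasj-}, the two monotone increasing sequences $\|(T_{n,{\rm reg}})^{**}\varphi\|$ have the same limit, so $\|S_{\rm r}\varphi\|=\|(H_{\infty,{\rm op}})^{\half}\varphi\|$ for every $\varphi$ in the common domain; in other words $S_{\rm r}$ and $(H_{\infty,{\rm op}})^{\half}$ are two representing maps for the same semi-inner product on the same domain.

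The final step is then an application of the uniqueness clause of Lemma \ref{aux} (equivalently, the Corollary after Theorem \ref{Grijp1}): since $S_{\rm r}$ and $(H_{\infty,{\rm op}})^{\half}$ have the same domain and induce the same quadratic form, there is a partial isometry $U$ with initial space $\cran S_{\rm r}$ and final space $\cran (H_{\infty,{\rm op}})^{\half}\subset\sH$ such that $(H_{\infty,{\rm op}})^{\half}=U S_{\rm r}$, which is the first identity in \eqref{vasj}. For the second identity, I would compute $(S_{\rm r})^*S_{\rm r}=(U S_{\rm r})^*(U S_{\rm r})=((H_{\infty,{\rm op}})^{\half})^*(H_{\infty,{\rm op}})^{\half}=H_{\infty,{\rm op}}$, using that $U$ is a partial isometry whose initial space contains $\cran S_{\rm r}$ so that $U^*U$ acts as the identity on $\cran S_{\rm r}$, together with the fact that $(H_{\infty,{\rm op}})^{\half}$ is a nonnegative selfadjoint operator whose square is $H_{\infty,{\rm op}}$ (note $\mul H_\infty^{\half}=\mul H_\infty$, so passing to operator parts is harmless here).

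The step I expect to require the most care is not any single deduction but keeping the bookkeeping of initial/final spaces of the various partial isometries straight — in particular verifying that the partial isometry $U_n$ relating $(H_{n,{\rm reg}})^{\half}$ to $(T_{n,{\rm reg}})^{**}$ is an \emph{isometry on the relevant range}, so that the norm equality used to match \eqref{vasf}--\eqref{vasg} with \eqref{GrijpBB1}--\eqref{GrijpCC1} is exact and not merely an inequality, and similarly that in $(S_{\rm r})^*S_{\rm r}=H_{\infty,{\rm op}}$ the partial isometry $U$ does not lose information, i.e. $\ker U\perp\cran S_{\rm r}$. All of this is already packaged in the cited results, so the argument is essentially a matter of correct assembly.
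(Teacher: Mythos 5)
Your proposal is correct and follows essentially the same route as the paper: apply the monotonicity principle to $H_n=T_n^*T_n^{**}=(T_{n,{\rm reg}})^*(T_{n,{\rm reg}})^{**}$, use the partial isometries from Lemma \ref{regs+} to translate \eqref{vasf}--\eqref{vasg} into \eqref{vash}--\eqref{vasi}, match these with \eqref{GrijpBB1}--\eqref{GrijpCC1} to get \eqref{vasj-}, and conclude \eqref{vasj} from the uniqueness of representing maps. Your write-up is in fact somewhat more explicit than the paper's (which compresses the last two steps into single sentences), but no new ideas are involved.
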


\begin{proof}
It is clear that the product $H_n=T_n^{*} T_n^{**} \in \bL(\sH)$
is a nonnegative selfadjoint relation. Furthermore,
the closures $T_n^{**}$ of $T_n$ satisfy the inequalities \eqref{vasa}.
Therefore,  the  nonnegative selfadjoint relations
$H_n=T_n^{*} T_n^{**} \in \bL(\sH)$ form a nondecreasing sequence
thanks to Theorem \ref{nieuw0}.
Thus by Theorem \ref{mopr} there exists a nonnegative selfadjoint relation
$H_\infty$ such that \eqref{vasf} and \eqref{vasg} hold. Remember that
\[
 H_n=T_n^{*} T_n^{**}= (T_{n, {\rm reg}})^*(T_{n, {\rm reg}})^{**},
\]
so that there exists a partial isometry $U_n \in \bL(\sK_n, \sH)$, such that
\[
 (H_{n, {\rm op}})^\half= U_n \,(T_{n, {\rm reg}})^{**}.
\]
In other words, \eqref{vasf} and \eqref{vasg} lead to \eqref{vash} and \eqref{vasi}.
Similarly, a comparison of \eqref{GrijpBB} and \eqref{GrijpCC} with
\eqref{vash} and \eqref{vasi} shows that \eqref{vasj-} holds.
Therefore, there exists a partial isometry $U \in \bL(\sL, \sH)$ such that
$(H_{\infty, {\rm op}})^\half= U S_{\rm r}$, which is the first assertion in \eqref{vasj}.
This identity shows that also the second assertion in \eqref{vasj} holds.
\end{proof}

Assume that the sequence $T_n \in \bL(\sH, \sK_n)$ in Theorem \ref{main}
has an upper bound, i.e.,
there exists a linear relation $T \in \bL(\sH, \sK)$,
where $\sK$ is a Hilbert space, such that
\begin{equation}\label{vasa0+}
T_n \prec_c T,  \quad m \leq n.
\end{equation}
For instance, if the sequence $T_n$ consists of operators then $T$ may be taken
as the limit of $T_n$ by Theorem \ref{Grijpa}. It follows from \eqref{vasa0+}  that
\[
 T_{n, {\rm reg}} \prec_c T_{\rm reg} \quad \mbox{and} \quad
  (T_{n, {\rm reg}})^{**} \prec_c  (T_{\rm reg})^{**}.
\]
With these upper bounds it follows for the closable limit  $T_{\rm r}$ of $T_{n, {\rm reg}}$ that
 \[
 T_{\rm r} \prec_c T_{\rm reg} \quad \mbox{and hence} \quad   (T_{\rm r})^{**} \prec_c (T_{\rm reg})^{**}.
\]
Consequently, for the closed limit $S_{\rm r}$ of $(T_{n, {\rm reg}})^{**}$ one has via \eqref{GrijpAAc}
\[
 S_{\rm r} \prec_c  (T_{\rm r})^{**} \prec_c (T_{\rm reg})^{**}.
\]

\section{An example of a nondecreasing sequence}\label{Ex}

In order to illustrate the various possibilities of convergence a simple example
of a nondecreasing sequence will be presented.
Let $R \in \bL(\sH, \sK)$  be a linear operator and define
the sequence of linear operators
$T_n \in \bL(\sH, \sK)$, $n \in \dN$, by
 \begin{equation}\label{griju1-}
 T_n=\sqrt{n}\, R.
 \end{equation}
Then it is clear from \eqref{griju1-} that
\[
\bigcap_{n=1}^\infty \dom T_n=\dom R \quad \mbox{and}
\quad T_n \prec_c T_{n+1},  \quad n \in \dN,
\]
so that \eqref{Grijpa} is satisfied.
Hence one can apply Theorem \ref{Grijp1} to
determine the limit $T$ of the sequence $T_n$.
It follows from \eqref{Grijpb} and  \eqref{Grijpc} that
\begin{equation}\label{griju2-}
 \dom T  =\ker R
 \quad \mbox{and} \quad T=O_{\,\ker R}.
\end{equation}
In fact, it is  clear that $T$ is closable and singular, simultaneously, and that
\begin{equation}\label{griju3-+}
 T^{**}=O_{\,\overline{\ker}\, R}.
\end{equation}
Moreover, observe that it follows from \eqref{griju2-} and \eqref{griju3-+}  that
\[
T^*T= \ker R \times (\ker R)^\perp
 \quad \mbox{and} \quad
 T^*T^{**}= \cker R \times (\ker R)^\perp.
\]
Note that in the special case where $R \in \bB(\sH, \sK)$
this illustrates \cite[Corollary 5.2.13]{BHS}.
If, in addition, the operator $R \in \bL(\sH, \sK)$ is closable,
 then all $T_n$ in \eqref{griju1-} are closable.
The closures $T_{n}^{**}$ of $T_n$ are given by
\[%begin{equation*}\label{griju4-}
 T_{n}^{**} = \sqrt{n} \,R^{**},
\]%end{equation*}
and it is clear  that \eqref{GrijpA} is satisfied.
Hence one can apply Proposition \ref{Grijp5} to obtain the closed limit $S$
of the sequence $T_{n}^{**}$.
It follows from \eqref{GrijpBS} and \eqref{GrijpCS} that
\begin{equation}\label{griju2-b}
 \dom S   =\ker R^{**} \quad  \mbox{and} \quad S=O_{\,\ker R^{**}}.
\end{equation}
One sees directly from \eqref{griju3-+} that $T^{**} \subset S$,
which illustrates the situation in Proposition \ref{Grijp5}.
The inclusion $T^{**} \subset S$ is strict precisely
when $\overline{\ker} R \subset \ker  R^{**}$ is strict.
As an example where the inclusion is strict, let $R$ be an operator such that
$R^{-1}$  is an operator that is not closable, in which case
$\ker R=\{0\}$ and $\ker R^{**} \neq \{0\}$.
Note that   the nonnegative selfadjoint relation $S^*S$ is given by
\[
 S^*S= \ker R^{**} \times (\ker R^{**})^\perp.
\]
as follows from \eqref{griju2-b}.

Next consider the Lebesgue decomposition of $R$ which is given by
\[
R=R_{\rm reg}+R_{\rm sing}, \quad R_{\rm reg}=(I-P)R, \quad R_{\rm sing}=PR,
\]
where $P$ be the orthogonal projection form $\sK$ onto $\mul R^{**}$.
Then the regular parts $T_{n, {\rm reg}}$ of $T_n$ in \eqref{griju1-} are given by
\[%begin{equation*}\label{griju1--}
 T_{n, {\rm reg}}=\sqrt{n}\, R_{\rm reg},
\]%end{equation*}
and it is clear  that \eqref{GrijpAA} is satisfied.
For the closable  limit $T_{\rm r}$ of the sequence $T_{n, {\rm reg}}$
it follows from \eqref{GrijpBB} and \eqref{GrijpCC}  that
\[%begin{equation*}\label{griju2--}
\dom T_{\rm r} =\ker R_{\rm reg}
\quad \mbox{and} \quad T_{\rm r}=O_{\,\ker R_{\rm reg}}.
\]%end{equation*}
 Since $T_{\rm reg}=O_{\rm \ker R}$
 one sees directly that $T_{\rm r} \prec_c T_{\rm reg}$,
which is the general situation.
The inequality is strict precisely when $\ker R \subset \ker R_{\rm reg}$ is strict.
Observe that
\[
 (T_{\rm r})^* T_{\rm r} = \ker R_{\rm reg} \times (\ker R_{\rm reg})^\perp
 \quad \mbox{and}\quad
 (T_{\rm r})^* (T_{\rm r})^{**} = \cker R_{\rm reg} \times (\ker R_{\rm reg})^\perp.
\]
The closures of $T_{n, {\rm reg}}$ are given by
\[
 (T_{n, \rm reg})^{**}=\sqrt{n} (R_{\rm reg})^{**},
\]
and it is clear that \eqref{GrijpAAc} is satisfied.
For the closed  limit $S_{\rm r}$ of the sequence $(T_{n, {\rm reg}})^{**}$
it follows from \eqref{GrijpBB1} and \eqref{GrijpCC1}  that
 \[
 \dom S_{\rm r}=\ker (R_{\rm reg})^{**} \quad \mbox{and} \quad
 S_{\rm r}=O_{\,\ker (R_{\rm reg})^{**}}.
\]
Therefore, one sees that
\begin{equation}\label{ss}
 (S_{\rm r})^* S_{\rm r}
 = \ker (R_{\rm reg})^{**} \times (\ker (R_{\rm reg})^{**})^\perp.
\end{equation}

Finally consider $T_n$ as in \eqref{griju1-}
with a general operator $R \in \bL(\sH, \sK)$.
Then the product relation  $H_n=T_n^* T_n^{**}$ is given by
\[
 H_n=n R^* R^{**}=n (R_{\rm reg})^{*} (R_{\rm reg})^{**}.
\]
Since $\ker (R_{\rm reg})^*(R_{\rm reg})^{**}=\ker (R_{\rm reg})^{**}$,
it follows from Example \ref{haha} that the limit $H_\infty$
of $H_n$ is given by $H_\infty= \ker (R_{\rm reg})^{**} \times (\ker (R_{\rm reg})^{**})^\perp$,
which agrees with \eqref{ss}.

\section{A description of closed linear operators} \label{closed}

Let $T_n \in \bB(\sH, \sK_n)$ be a sequence of operators
that satisfy \eqref{Grijpa}. According to Corollary \ref{new1}
there is a closed limit $T \in \bL(\sH, \sK)$ which
satisfies \eqref{bbd1} and \eqref{bbd2}.
This section contains some variations on this theme.

\medskip

First it is shown that any nonnegative selfadjoint operator is rougly speaking
the limit of a certain class of nonnegative bounded linear operators.

\begin{lemma}\label{labadj}
Let $A \in \bL(\sH)$ be a nonnegative selfadjoint operator.
Then there exists a sequence of nonnegative selfadjoint
operators $A_n \in \bB(\sH)$ such that
\begin{equation}\label{closeda1}
  (A_m  \varphi, \varphi) \leq (A_n \varphi, \varphi), \quad \varphi \in \sH,
   \quad m \leq n,
\end{equation}
and
\begin{equation}\label{closeda2}
   (A_n \varphi, \varphi) \nearrow \|A^\half \varphi\|^2,
   \quad \varphi  \in \dom A^\half.
 \end{equation}
 \end{lemma}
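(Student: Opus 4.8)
The plan is to invoke the spectral theorem for the nonnegative selfadjoint operator $A$ and to take for $A_n$ the bounded truncations of $A$ obtained by cutting off the spectrum at $n$. Write $A=\int_{[0,\infty)}\lambda\,dE(\lambda)$ with $E$ the spectral measure of $A$, and for $n\in\dN$ put
\[
 A_n=\int_{[0,n]}\lambda\,dE(\lambda)=f_n(A),\qquad f_n(\lambda)=\lambda\,\mathbf{1}_{[0,n]}(\lambda).
\]
Since $f_n$ is a bounded nonnegative Borel function, $A_n$ is a bounded nonnegative selfadjoint operator with $\dom A_n=\sH$, so $A_n\in\bB(\sH)$.

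Next I would verify the two displayed properties by passing to the scalar spectral measures. For $\varphi\in\sH$ let $\mu_\varphi$ be the finite positive Borel measure $\mu_\varphi(\cdot)=(E(\cdot)\varphi,\varphi)=\|E(\cdot)\varphi\|^2$. Then $(A_n\varphi,\varphi)=\int_{[0,\infty)}f_n\,d\mu_\varphi$. For $m\le n$ one has $f_m\le f_n$ pointwise on $[0,\infty)$, which gives \eqref{closeda1} at once. As $n\to\infty$ the functions $f_n$ increase pointwise to the identity function $\lambda\mapsto\lambda$ on $[0,\infty)$; hence, by the monotone convergence theorem,
\[
 (A_n\varphi,\varphi)=\int_{[0,\infty)}f_n\,d\mu_\varphi\ \nearrow\ \int_{[0,\infty)}\lambda\,d\mu_\varphi(\lambda).
\]
Finally, recall the standard description of the domain of the square root: $\varphi\in\dom A^\half$ if and only if $\int_{[0,\infty)}\lambda\,d\mu_\varphi(\lambda)<\infty$, and in that case $\|A^\half\varphi\|^2=\int_{[0,\infty)}\lambda\,d\mu_\varphi(\lambda)$. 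Combining this with the previous display yields \eqref{closeda2}.

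There is no genuine obstacle here; the only points requiring a word of care are the measure-theoretic justification of the monotone passage to the limit and the identification of $\dom A^\half$ with the set where $\int_{[0,\infty)}\lambda\,d\mu_\varphi(\lambda)$ is finite, both of which are part of the functional calculus for $A$. I would also remark that one may just as well take the Yosida-type approximants $A_n=nA(n+A)^{-1}=\int_{[0,\infty)}\frac{n\lambda}{n+\lambda}\,dE(\lambda)$, since $\lambda\mapsto\frac{n\lambda}{n+\lambda}$ is likewise nonnegative, bounded by $n$, nondecreasing in $n$, and tends to $\lambda$ as $n\to\infty$; the same argument then applies verbatim, and this choice has the extra feature that $A_n\to A$ in the strong resolvent sense.
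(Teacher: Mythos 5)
Your proposal is correct and follows essentially the same route as the paper: both define $A_n=\int_0^n\lambda\,dE_\lambda$ via the spectral representation of $A$ and deduce \eqref{closeda1} and \eqref{closeda2} from the monotone increase of the truncated integrands. Your write-up merely adds the explicit measure-theoretic justification (scalar spectral measures and monotone convergence) and the optional Yosida-type variant, neither of which changes the argument.
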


\begin{proof}
Consider the spectral representation of the nonnegative selfadjoint operator $A$
the Hilbert space $\sH$:
 \[
 A=\int_0^\infty \lambda\,dE_\lambda.
\]
By means of this representation
let the nonnegative selfadjoint operators $A_n\in \bB(\sH)$ be defined by
 \[
 A_n=\int_0^n \lambda\,dE_\lambda, \quad n\in\dN.
\]
Then is is clear that $(A_m \varphi, \varphi) \leq (A_n \varphi, \varphi)$, $m \leq n$,
for all $\varphi \in \sH$.  This gives \eqref{closeda1}.
 By the construction of the sequence $A_n$ one obtains
\[
  (A_n \varphi, \varphi) \nearrow \|A^\half  \varphi\|^2,  \quad
  \varphi \in \dom A^\half,
\]
which gives \eqref{closeda2}.
\end{proof}

As a consequence of Lemma \ref{labadj}
there is some kind converse of Corollary \ref{new1}.

\begin{proposition}\label{Grijp41}
Let $T \in \bL(\sH, \sK)$ be a closed linear operator.
Then there exists a sequence of linear operators
$T_n \in \bB(\cdom T, \sH)$, such that
\begin{equation}\label{closed1}
 \|T_m \varphi\| \leq \|T_n \varphi\|, \quad \varphi \in \cdom T, \quad m \leq n,
\end{equation}
and
\begin{equation}\label{closed1+}
   \|T_{n} \varphi \|  \nearrow \|T \varphi \|, \quad \varphi \in \dom T.
\end{equation}
\end{proposition}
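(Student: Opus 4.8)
The plan is to apply Lemma~\ref{labadj} to the nonnegative selfadjoint operator $A=T^*T$ (which is an operator precisely because $T$ is a closed operator, so $\mul T^*T=\{0\}$) and then to transfer the resulting nondecreasing sequence of bounded nonnegative operators back to the setting of operators mapping into $\sH$ via square roots. Recall that for a closed linear operator $T \in \bL(\sH,\sK)$ one has $\dom (T^*T)^\half=\dom T$ and $\|(T^*T)^\half \varphi\|=\|T\varphi\|$ for $\varphi\in\dom T$; moreover $\cdom T=\cdom (T^*T)^\half=(\ker (T^*T)^\half)^\perp$, and $(T^*T)^\half$ restricted to $\cdom T$ is injective with dense range there.

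First I would invoke Lemma~\ref{labadj} to obtain a sequence of nonnegative selfadjoint operators $A_n\in\bB(\sH)$ with $(A_m\varphi,\varphi)\le (A_n\varphi,\varphi)$ for all $\varphi\in\sH$, $m\le n$, and $(A_n\varphi,\varphi)\nearrow \|(T^*T)^\half\varphi\|^2$ for $\varphi\in\dom (T^*T)^\half=\dom T$. Here one takes $A_n=\int_0^n\lambda\,dE_\lambda$ for the spectral measure $E$ of $A=T^*T$, so that each $A_n$ vanishes on $\ker A=\ker T=(\cdom T)^\perp$ and leaves $\cdom T$ invariant. Define $T_n=(A_n)^\half\uphar \cdom T\in\bB(\cdom T,\sH)$ (one can regard $A_n^\half$ as an operator on $\cdom T$, or keep it on $\sH$ and note it is zero on the orthogonal complement). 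Then $\|T_n\varphi\|^2=\|A_n^\half\varphi\|^2=(A_n\varphi,\varphi)$ for $\varphi\in\cdom T$, so the monotonicity $(A_m\varphi,\varphi)\le (A_n\varphi,\varphi)$ immediately yields \eqref{closed1}, and the convergence $(A_n\varphi,\varphi)\nearrow \|(T^*T)^\half\varphi\|^2=\|T\varphi\|^2$ for $\varphi\in\dom T$ yields \eqref{closed1+} after taking square roots (monotone convergence of the nonnegative square roots follows from monotone convergence of the squares).

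The only real point requiring care is the identification $\|(T^*T)^\half \varphi\| = \|T\varphi\|$ together with $\dom (T^*T)^\half = \dom T$; this is the standard polar-decomposition fact for closed operators, and it is exactly the kind of statement collected in Section~\ref{app} (it is the operator case of the relations appearing in Lemma~\ref{regs+} and the formula $T^*T^{**}=(T_{\rm reg})^*(T_{\rm reg})^{**}$, specialized to $T$ closed with $\mul T=\{0\}$), so I would simply cite it. A secondary bookkeeping point is that $T_n$ is required to be defined on $\cdom T$ rather than on all of $\sH$: since $\dom T\subset\cdom T$ and $A_n$ annihilates $(\cdom T)^\perp$, restricting to $\cdom T$ loses nothing and makes \eqref{closed1} hold on the stated domain. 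I would then conclude that the sequence $T_n\in\bB(\cdom T,\sH)$ satisfies \eqref{closed1} and \eqref{closed1+}, which is the assertion; and one may remark that, conversely, applying Corollary~\ref{new1} to this sequence recovers $T$ (up to a partial isometry), closing the circle with Corollary~\ref{new1}.

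I do not expect a genuine obstacle here: the statement is essentially a repackaging of Lemma~\ref{labadj} through the polar decomposition, and the main work is the clean verification that square roots of a monotone sequence of bounded nonnegative operators converge monotonically in the strong sense on the relevant domain, which is classical (e.g.\ via the integral representation $\lambda^{1/2}=c\int_0^\infty \frac{\lambda}{\lambda+t}\,t^{-1/2}\,dt$ and monotone convergence of resolvents, or directly from the spectral theorem since all the $A_n$ are functions of the single operator $A$). Since in fact $A_n=f_n(A)$ with $f_n(\lambda)=\lambda\mathbf 1_{[0,n]}(\lambda)$, the square roots are $A_n^\half=g_n(A)$ with $g_n(\lambda)=\lambda^{1/2}\mathbf 1_{[0,n]}(\lambda)\nearrow \lambda^{1/2}$ pointwise, so the monotone convergence $\|A_n^\half\varphi\|\nearrow\|A^\half\varphi\|$ on $\dom A^\half$ is immediate from the spectral theorem, with no need for any integral formula.
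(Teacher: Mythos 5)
Your overall strategy coincides with the paper's: apply Lemma~\ref{labadj} to a spectral truncation of $T^*T$ and take square roots, using the identity $\|(T^*T)^{\half}_{\rm reg}\varphi\|=\|T\varphi\|$ on $\dom T$ (Lemma~\ref{regs+}) to convert the quadratic‐form convergence into \eqref{closed1+}. However, there is a genuine error in your justification for invoking Lemma~\ref{labadj}. You assert that $A=T^*T$ is an operator ``precisely because $T$ is a closed operator, so $\mul T^*T=\{0\}$.'' This is false in general: by \eqref{mul} one has $\mul T^*T=\mul T^*=(\dom T)^\perp$, which is nontrivial exactly when $T$ is not densely defined --- and the proposition is stated in that generality (this is the only reason the statement bothers to place $T_n$ in $\bB(\cdom T,\sH)$ rather than $\bB(\sH,\sH)$). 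So for non--densely defined $T$ the relation $T^*T$ is not an operator and Lemma~\ref{labadj} cannot be applied to it as it stands; likewise your claim $\ker A=\ker T=(\cdom T)^\perp$ is wrong, since $\ker T$ has nothing to do with $(\dom T)^\perp$ (take $T$ injective and not densely defined).

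The repair is exactly the paper's move: write $H=T^*T=A \hoplus \bigl(\{0\}\times(\dom T)^\perp\bigr)$ with $A=H_{\rm reg}$ a nonnegative selfadjoint \emph{operator} in the Hilbert space $\cdom T$, apply Lemma~\ref{labadj} to $A$ there, and set $T_n=A_n^{\half}\in\bB(\cdom T)\subset\bB(\cdom T,\sH)$; Lemma~\ref{regs+} then gives $\|A^{\half}\varphi\|=\|T\varphi\|$ for $\varphi\in\dom A^{\half}=\dom T$. With this one correction your argument becomes the paper's proof verbatim; in the densely defined case ($\cdom T=\sH$) your version is already correct as written, and your closing remarks about the spectral-calculus monotonicity of $A_n^{\half}=g_n(A)$ and the converse via Corollary~\ref{new1} are sound.
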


\begin{proof}
The product relation $H=T^*T$ is  nonnegative and selfadjoint in $\sH$
with $\mul H=\mul T^*=(\dom T)^\perp$. Then $H= A \hoplus (\{0\} \times (\dom T)^\perp$,
where $A=H_{\rm reg}$ is a nonnegative selfadjoint operator in $\cdom T$.
Then there exists a sequence of nonnegative selfadjoint
operators $A_n \in \bB(\cdom T)$ such that
\[%begin{equation}\label{closed111}
  (A_m  \varphi, \varphi) \leq (A_n \varphi, \varphi), \quad \varphi \in \cdom T,
   \quad m \leq n,
\]%end{equation}
and
\[%begin{equation}\label{closed112}
   (A_n \varphi, \varphi) \to \| A^\half \varphi \|^2, \quad \varphi  \in \dom A^\half=\dom T \subset \cdom T.
\]%end{equation}
Due to $H=T^*T$ and $A=H_{\rm reg}$ one sees  that
\[
\|A^\half \varphi\|=\|T\varphi \|,  \quad \varphi=\dom A^\half=\dom T,
\]
see Lemma \ref{regs+}.
Finally define $T_n=A_n^\half$, so that \eqref{closed1} and \eqref{closed1+} are satisfied.
\end{proof}

The last result in this section is a direct consequence of Proposition \ref{Grijp41};
it describes the closability of an operator in terms of a sequence of bounded linear operators;
see for the original statement  \cite[Theorem 8.8, Theorem 8.9]{HSS2018}.

\begin{corollary}%\label{Grijp7}
Let $S \in \bL(\sH, \sK)$ be a linear operator.
Then the following statements are equivalent:
\begin{enumerate}\def\labelenumi{\rm(\roman{enumi})}
\item  $S$ is closable;
\item there exists a sequence of  linear operators $T_n \in \bB(\cdom S, \sK_n)$,
where $\sK_n$ are Hilbert spaces, such that
\begin{equation}\label{closed1B}
 \|T_m \varphi\| \leq \|T_n \varphi\|, \quad \varphi \in \cdom S, \quad m \leq n,
\end{equation}
and
\begin{equation}\label{closed1+B}
   \|T_{n} \varphi \|  \nearrow \|S \varphi \|, \quad \varphi \in \dom S.
\end{equation}
\end{enumerate}
\end{corollary}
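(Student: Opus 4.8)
The plan is to prove the equivalence by establishing each implication, with the substantive content flowing from Proposition \ref{Grijp41} and Theorem \ref{Grijp1}.

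\medskip

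\emph{From (i) to (ii).} Assume $S \in \bL(\sH, \sK)$ is closable. First I would pass to the closure $S^{**} \in \bL(\sH, \sK)$, which is a closed linear operator, and observe that $\cdom S = \cdom S^{**}$ and $\|S\varphi\| = \|S^{**}\varphi\|$ for $\varphi \in \dom S$. Now apply Proposition \ref{Grijp41} to the closed operator $S^{**}$: this yields a sequence $T_n \in \bB(\cdom S^{**}, \sH)$ with $\|T_m\varphi\| \le \|T_n\varphi\|$ for $\varphi \in \cdom S^{**}$, $m \le n$, and $\|T_n\varphi\| \nearrow \|S^{**}\varphi\|$ for $\varphi \in \dom S^{**}$. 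Restricting the monotone convergence to the subset $\dom S \subset \dom S^{**}$ and using $\|S^{**}\varphi\| = \|S\varphi\|$ there gives exactly \eqref{closed1B} and \eqref{closed1+B} with $\sK_n = \sH$. (Here I keep the freedom to label the target spaces $\sK_n$, although the construction naturally produces $\sH$ in each case; the statement only asks for the existence of some Hilbert spaces $\sK_n$.)

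\medskip

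\emph{From (ii) to (i).} Assume such a sequence $T_n \in \bB(\cdom S, \sK_n)$ exists with \eqref{closed1B} and \eqref{closed1+B}. The sequence $T_n$, viewed as operators in $\bL(\cdom S, \sK_n)$, satisfies \eqref{Grijpa} by the operator characterization of contractive domination (domination for operators is exactly $\dom T_n \subset \dom T_m$ together with the norm inequality, and here all domains equal $\cdom S$). Since each $T_n$ is bounded and everywhere defined on the Hilbert space $\cdom S$, each $T_n$ is closed, so Corollary \ref{new1} applies: there is a closed linear operator $T$ on $\cdom S$ whose domain is $\{\varphi \in \cdom S : \sup_n \|T_n\varphi\| < \infty\}$ and with $\|T_n\varphi\| \nearrow \|T\varphi\|$ on $\dom T$. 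By \eqref{closed1+B} the set $\dom S$ is contained in $\dom T$, and $\|T\varphi\| = \|S\varphi\|$ for all $\varphi \in \dom S$. Thus $T$ is a closed operator extending $S$ in the sense of preserving the norm on $\dom S$; more precisely the relation $\{\{\varphi, S\varphi\} : \varphi \in \dom S\}$ has the same closure (up to a partial isometry on the range) as $T \uphar \dom S$, hence is the graph of a closable operator. Concretely: if $\varphi_k \in \dom S$ with $\varphi_k \to 0$ and $S\varphi_k$ Cauchy, then $T\varphi_k \to 0$ in norm since $\|T(\varphi_k - \varphi_j)\| = \|S(\varphi_k-\varphi_j)\|$ and $\|T\varphi_k\| = \|S\varphi_k\|$, forcing $S\varphi_k \to 0$; therefore $S$ is closable.

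\medskip

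I expect the main obstacle to be purely bookkeeping: matching the domain description \eqref{bbd1} from Corollary \ref{new1} against the hypothesis \eqref{closed1+B} to conclude $\dom S \subset \dom T$ with equality of norms, and then translating the statement "$T$ is closed and $\|T\varphi\| = \|S\varphi\|$ on $\dom S$" into "$S$ is closable" without confusing the norm-preservation (which is all a representing-map type limit gives) with a genuine operator inclusion $S \subset T$. The cleanest route for (ii)$\Rightarrow$(i) avoids $T$ altogether and argues directly from the Cauchy criterion as in the final sentence above, since \eqref{closed1+B} gives $\|S\varphi\| = \lim_n \|T_n\varphi\|$ and each $T_n$ being bounded everywhere defined makes $\varphi \mapsto \|T_n\varphi\|$ continuous on $\cdom S$, so the limit function is lower semicontinuous and the closability of $S$ follows from the standard argument that a closable seminorm is one that is lower semicontinuous on the relevant subspace.
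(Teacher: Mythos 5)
Your proposal is correct and follows essentially the same route as the paper: (i)$\Rightarrow$(ii) by passing to the closure and invoking Proposition \ref{Grijp41}, and (ii)$\Rightarrow$(i) by applying Corollary \ref{new1} to obtain a closed limit $T$ with $\|T\varphi\|=\|S\varphi\|$ on $\dom S$ and deducing closability of $S$. Your explicit Cauchy-sequence argument in the last step merely spells out what the paper leaves implicit in ``since $T$ is closed it follows that $S$ is closable.''
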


\begin{proof}
(i) $\Rightarrow$ (ii)
Let $S \in \bL(\sH, \sK)$ be a closable operator and denote its closure by $T$.
Then $T \in \bL(\sH,\sK)$ is a closed operator which extends $S$,
such that $\cdom T=\cdom S$.
Now apply Proposition \ref{Grijp41}.

(ii) $\Rightarrow$ (i) Let $T_n \in \bB(\cdom S, \sK_n)$  be a sequence
for which \eqref{closed1B} holds. Then by Corollary \ref{new1}
there exists a closed linear operator $T \in \bL(\cdom S, \sK)$,
such that
\[%begin{equation*}\label{bbbd1}
  \dom T= \left\{ \varphi \in \cdom S:\,
   \sup_{n \in \dN} \|T_n \varphi\| < \infty \right\},
\]%end{equation*}
and which satisfies
\[%begin{equation*}\label{bbbd2}
   \|T_{n} \varphi \|  \nearrow \|T \varphi \|, \quad \varphi \in \cdom S.
\]%end{equation*}
Thanks to \eqref{closed1+B} one has $\|S \varphi\|=\|T \varphi\|$ for all $\varphi \in \dom S$.
Since $T$ is closed if follows that $S$ is closable.
\end{proof}

An application of these results can be found in \cite[Theorem 6.4]{HS2022},
where pairs of bounded linear operators are classified in terms of almost domination.

\section{Nonincreasing sequences of linear operators}\label{noninc}

In this section there is a brief review for the situation of nonincreasing sequences
of linear operators in the sense of contractive domination.
First recall the analog of the monotonicity principle in Theorem \ref{mopr}
for nonincreasing sequences; see \cite[Theorem~3.7]{BHSW2010}.

\begin{theorem}\label{incr}
Let $K_n \in \bL(\sH)$ be a  sequence of nonnegative selfadjoint  relations
and assume they satisfy
\[%begin{equation}\label{vvasd}
K_n \leq K_m,  \quad m \leq n.
\]%end{equation}
Then there exists a nonnegative selfadjoint relation $K_\infty \in \bL(\sH)$ with
\begin{equation}\label{vvasd1}
K_\infty \leq K_n,  \quad n \in \dN.
\end{equation}
In fact, $K_n \to K_\infty$ in the strong resolvent sense
or, equivalently, in the strong graph sense.
Moreover, the square root of $K_\infty$ satisfies
\begin{equation}\label{vvasf}
\ran (K_\infty)^\half=\left\{ \varphi \in \bigcap_{n \in \dN} \ran (K_n)^\half  :\,
  \lim_{n \to \infty} \| ((K_{n})^{-\half})_{\rm reg} \varphi\| < \infty \right\}
\end{equation}
and, furthermore,
\begin{equation}\label{vvasg}
 \| ((K_{n})^{-\half})_{\rm reg} \varphi\| \nearrow \| ((K_{\infty})^{-\half})_{\rm reg} \varphi\|,
 \quad \varphi \in \ran (K_\infty)^\half.
\end{equation}
\end{theorem}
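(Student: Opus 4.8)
The plan is to reduce Theorem \ref{incr} to the nondecreasing case already settled in Theorem \ref{mopr}, using the inversion $K \mapsto K^{-1}$, which is an order-reversing involution on the set of nonnegative selfadjoint relations in $\sH$ (see \cite{BHS}). First I would record the relevant interplay between inversion, square roots, and regular parts: for a nonnegative selfadjoint relation $K \in \bL(\sH)$ one has $(K^{-1})^\half = (K^\half)^{-1}$, hence $\dom (K^{-1})^\half = \ran K^\half$, while $\mul K^{-1} = \ker K$ and $\ker K^{-1} = \mul K$. In particular the regular part of $(K^{-1})^\half$ is exactly the operator written as $((K)^{-\half})_{\rm reg}$ in \eqref{vvasf}--\eqref{vvasg}, so that $\|((K^{-1})_{\rm reg})^\half \varphi\| = \|((K)^{-\half})_{\rm reg}\,\varphi\|$ for $\varphi \in \ran K^\half$.

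Next, put $L_n := K_n^{-1}$. Since $K_n \leq K_m$ for $m \leq n$ and inversion reverses the order $\leq$ (recall \eqref{root}), one gets $L_m \leq L_n$ for $m \leq n$, so $(L_n)$ is a nondecreasing sequence of nonnegative selfadjoint relations. Applying Theorem \ref{mopr} produces a nonnegative selfadjoint relation $L_\infty \in \bL(\sH)$ with $L_n \leq L_\infty$ for all $n$, with $L_n \to L_\infty$ in the strong resolvent (equivalently, strong graph) sense, with
\[
\dom (L_\infty)^\half = \Big\{ \varphi \in \bigcap_{n\in\dN}\dom (L_n)^\half :\ \sup_{n\in\dN}\|(L_{n,{\rm reg}})^\half \varphi\| < \infty \Big\},
\]
and with $\|(L_{n,{\rm reg}})^\half \varphi\| \nearrow \|(L_{\infty,{\rm reg}})^\half \varphi\|$ on $\dom (L_\infty)^\half$.

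Now set $K_\infty := (L_\infty)^{-1}$, again a nonnegative selfadjoint relation. Order-reversal applied to $L_n \leq L_\infty$ gives $K_\infty \leq K_n$ for all $n$, which is \eqref{vvasd1}. Strong graph convergence is preserved under inversion, since $K \mapsto K^{-1}$ merely interchanges the two components of each pair in the graph; hence $L_n \to L_\infty$ in the strong graph sense is the same statement as $K_n \to K_\infty$ in the strong graph sense, which by \cite{BHS} is equivalent to strong resolvent convergence. Finally I would transcribe the square root formulas through the first step: $\dom (L_n)^\half = \ran (K_n)^\half$, $\dom (L_\infty)^\half = \ran (K_\infty)^\half$, and $\|(L_{n,{\rm reg}})^\half \varphi\| = \|((K_n)^{-\half})_{\rm reg}\,\varphi\|$; because this last quantity is nondecreasing in $n$, the condition $\sup_n < \infty$ coincides with $\lim_{n\to\infty} < \infty$, and the description of $\dom (L_\infty)^\half$ becomes \eqref{vvasf}, while the monotone convergence statement becomes \eqref{vvasg}.

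The main obstacle I expect is the bookkeeping in the first step, namely pinning down the identities relating $(K^{-1})^\half$, $(K^\half)^{-1}$, the regular part $((K^{-1})_{\rm reg})^\half$, and the symbol $((K)^{-\half})_{\rm reg}$, with careful attention to the multivalued parts and kernels exchanged by inversion. Once those identities are in hand, the remainder is a routine transfer of Theorem \ref{mopr} across the inversion map, and no new analytic input is required.
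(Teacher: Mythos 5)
Your proposal is correct and follows essentially the same route as the paper: the paper also passes to the inverses $(K_n)^{-1}$, invokes antitonicity to get a nondecreasing sequence, applies Theorem \ref{mopr}, and then translates the conclusions back through inversion (using that strong graph convergence and the square-root formulas transfer under $K\mapsto K^{-1}$). Your extra bookkeeping of the identities $(K^{-1})^{\half}=(K^{\half})^{-1}$, $\dom (K^{-1})^{\half}=\ran K^{\half}$, and $((K^{-1})_{\rm reg})^{\half}=((K)^{-\half})_{\rm reg}$ is exactly what the paper summarizes as ``a direct translation of similar statements in Theorem \ref{mopr}.''
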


\begin{proof}
A short proof is included for completeness.
By antitonicity, the sequence $(K_n)^{-1} \in \bL(\sH)$ is nondecreasing;
cf. \cite[Corollary 5.2.8]{BHS}.
Hence, by Theorem \ref{mopr}, there exists a nonnegative selfadjoint relation,
say, $(K_\infty)^{-1} \in \bL(\sH)$, such that $(K_\infty)^{-1}$
is the limit of the sequence $(K_n)^{-1} \in \bL(\sH)$ in the strong resolvent sense
or, equivalently, in the strong graph sense,
and $(K_n)^{-1} \leq  (K_\infty)^{-1}$. Then, again by antitonicity,  $K_\infty \leq K_n$
and, moreover,  $K _\infty$ is the limit of the sequence $K_n$
in the strong graph sense.
The rest of the statements is a direct translation of similar statements in Theorem \ref{mopr}.
\end{proof}

Note that the multivalued parts of the relations $K_n$ in Theorem \ref{mopr}
form a nonincreasing sequence.
If one of the relations $K_n$ in Theorem \ref{incr} is an operator,
then all of its successors are operators and, ultimately,
the limit $K_\infty$ is an operator.

\begin{example}%\label{hahaha}
Let $A \in \bL(\sH)$ be a nonnegative selfadjoint operator or relation.
Then it is clear that the sequence $K_n=\frac{1}{n}A$ of nonnegative selfadjoint operators or relations
is nonincreasing. Hence there exists a nonnegative selfadjoint relation $K_\infty \in \bL(\sH)$
such that $K_n \to K_\infty$ is the strong graph sense.
By means of Example \ref{haha} one sees immediately that
\[
 K_\infty=\cdom A \times \mul A.
\]
\end{example}

\medskip

The following result is the analog of Theorem \ref{Grijp1} for nonincreasing sequences
of linear operators. Due to the sequence being nonincreasing
there are no further convergence
restrictions for the limit as in Theorem \ref{Grijp1}.

\begin{theorem}\label{vGrijp1}
Let $T_n \in \bL(\sH, \sK_n)$, where $\sK_n$ are Hilbert spaces,
be a sequence of linear operators which satisfy
\begin{equation}\label{vGrijpa}
T_n \prec_c T_m, \quad m \leq n.
\end{equation}
Then there exists a linear operator $T \in \bL(\sH, \sK)$,
where $\sK$ is a Hilbert space,  such that
\begin{equation}\label{vGrijpb}
  \dom T=\bigcup_{n \in \dN} \dom T_n,
\end{equation}
and  which satisfies
 \begin{equation}\label{vGrijpc}
 T \prec_c T_n \quad \mbox{and} \quad
  \|T_n \varphi\| \searrow \|T \varphi\|, \quad \varphi \in \dom T.
 \end{equation}
\end{theorem}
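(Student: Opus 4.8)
The plan is to follow the construction in the proof of Theorem \ref{Grijp1}, but now exploiting that along the sequence the domains \emph{increase} while the norms \emph{decrease}, so that no finiteness condition enters the limit. First I would record the consequences of \eqref{vGrijpa}: by the operator characterization of contractive domination recalled in Section \ref{reldom}, the hypothesis $T_n \prec_c T_m$ for $m \le n$ means $\dom T_m \subset \dom T_n$ and $\|T_n \varphi\| \le \|T_m \varphi\|$ for all $\varphi \in \dom T_m$. Hence the domains form a nondecreasing chain, so $\sD := \bigcup_{n \in \dN} \dom T_n$ is a linear subspace of $\sH$; and for fixed $\varphi \in \sD$, picking $m$ with $\varphi \in \dom T_m$, the sequence $\|T_n\varphi\|$, $n \ge m$, is nonincreasing and bounded below by $0$, hence convergent. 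I define $\|\varphi\|_+ := \lim_{n\to\infty}\|T_n\varphi\| = \inf_{n :\, \varphi \in \dom T_n}\|T_n\varphi\|$, which is a seminorm on $\sD$ (subadditivity and homogeneity pass to the limit).

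Next I would equip $\sD$ with a semi-inner product, exactly as in Theorem \ref{Grijp1}. Since the $T_n$ map into different spaces $\sK_n$, there is no fixed space in which to take a limit of $(T_n\varphi,T_n\psi)_{\sK_n}$ directly; instead, applying the polarization identity in each $\sK_n$ and using that $\|T_n(\varphi+i^k\psi)\| \to \|\varphi+i^k\psi\|_+$ on $\sD$, the scalars $(T_n\varphi,T_n\psi)_{\sK_n}$ converge, and I set $(\varphi,\psi)_+ := \lim_{n\to\infty}(T_n\varphi,T_n\psi)_{\sK_n}$. A pointwise limit of semi-inner products is again a semi-inner product, so $(\cdot,\cdot)_+$ is a (possibly degenerate) semi-inner product on $\sD$ whose associated seminorm is $\|\cdot\|_+$. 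Now Lemma \ref{aux} produces a representing map, i.e., a linear operator $T \in \bL(\sH,\sK)$ with $\sK$ a Hilbert space, $\dom T = \sD$, and $(T\varphi,T\psi)_{\sK} = (\varphi,\psi)_+$ for $\varphi,\psi \in \sD$. This gives \eqref{vGrijpb}, and taking $\psi=\varphi$ yields $\|T\varphi\| = \|\varphi\|_+ = \inf_n\|T_n\varphi\|$, that is, $\|T_n\varphi\|\searrow\|T\varphi\|$ for every $\varphi\in\dom T$, which is the convergence part of \eqref{vGrijpc}.

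Finally, it remains to check $T \prec_c T_n$ for each $n$. Since $T$ is an operator (by the construction in Lemma \ref{aux}), I would again invoke the operator characterization: one has $\dom T_n \subset \sD = \dom T$, and for $\varphi \in \dom T_n$ the monotonicity gives $\|T\varphi\| = \inf_k\|T_k\varphi\| \le \|T_n\varphi\|$; hence $T\prec_c T_n$. In contrast to Theorem \ref{Grijp1}, no condition $\sup_n\|T_n\varphi\|<\infty$ is needed and there is no analogue of its parts (a) and (b), precisely because the sequence is nonincreasing. The only point requiring a little care is the construction of $(\cdot,\cdot)_+$ on $\sD$ and the verification that it is sesquilinear and nonnegative; this is routine once one notes that each $(T_n\cdot,T_n\cdot)_{\sK_n}$ has these properties and that they survive the pointwise limit, so I do not expect a genuine obstacle beyond this bookkeeping.
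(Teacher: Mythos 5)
Your proposal is correct and follows essentially the same route as the paper: form $\sD=\bigcup_n\dom T_n$, define $\|\varphi\|_+=\lim_n\|T_n\varphi\|$ (which exists by monotonicity), pass to the associated semi-inner product, and invoke Lemma \ref{aux} to obtain the representing map $T$. Your explicit polarization argument for the existence of $(\cdot,\cdot)_+$ and your explicit check of $T\prec_c T_n$ merely spell out steps the paper leaves implicit.
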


\begin{proof}
Denote the right-hand side of \eqref{vGrijpb} by $\sD$. Now let $\varphi \in \sD$,
then clearly $\varphi \in \dom T_N$ for some $N \in \dN$.
For all $n \geq N$ one has $T_n \prec_c T_N$, which implies that $\varphi \in \dom T_n$
for all $n \geq N$ and $\lim_{n \to \infty} \|T_n \varphi \|$ exists by \eqref{vGrijpa}.
Hence for each $\varphi \in \sD$ one may define
\[
 \|\varphi\|_+= \lim_{n \to \infty}  \|T_n \varphi \|.
\]
Then $\|\cdot \|_+$ generates a well-defined seminorm on the linear subspace $\sD$.
Let $(\cdot, \cdot)_+$ be the corresponding semi-inner product.
By Lemma \ref{aux} there exists a linear operator $T$ defined on
$\dom T=\sD \subset \sH$ to a Hilbert space $\sK$ such that
\[
 (\varphi, \psi)_+=(T\varphi, T \psi), \quad \varphi \in \sD.
\]
This shows the assertion in \eqref{vGrijpc}.
\end{proof}

Now Theorem \ref{vGrijp1} will be applied under the assumption that
the linear operators $T_n \in \bL(\sH, \sK_n)$ are closed.
Then the corresponding relations $K_n=T_n^* T_n \in \bL(\sH)$
are nonnegative and selfadjoint.

\begin{theorem}\label{vgrijp4}
 Let $T_n \in \bL(\sH, \sK_n)$, where $\sK_n$ are Hilbert spaces,
be a sequence of closed  linear operators which satisfy \eqref{vGrijpa}
and let $T \in \bL(\sH, \sK)$, where $\sK$ is a Hilbert space,
be the limit operator satisfying  \eqref{vGrijpb} and \eqref{vGrijpc}.
Let $K_\infty \in \bL(\sH)$ be the nonnegative selfadjoint relation,
which is the limit of the nonincreasing sequence of nonnegative selfadjoint
relations $K_n=T_n^* T_n \in \bL(\sH)$, so that  $K_\infty$ satisfies
\eqref{vvasf} and \eqref{vvasg}.
Then $K_\infty$ and $T$ are connected via
\begin{equation}\label{un}
 K_\infty=T^* T^{**}.
\end{equation}
Consequently, there exists a partial isometry $U \in \bB(\sK,\sH)$ such that
\begin{equation}\label{deux}
 (K_{\infty, {\rm reg}})^\half= U (T_{\rm reg})^{**}
 \quad \mbox{or} \quad
 (T_{\rm reg})^{**}=U^*(K_{\infty, {\rm reg}})^\half.
\end{equation}
Moreover, for the limit $T \in \bL(\sH,\sK)$ one has
\begin{enumerate}\def\labelenumi{\rm(\alph{enumi})}
\item $T$ is closable if and only if $T \subset U^*(K_{\infty, {\rm reg}})^\half$;
\item $T$ is closed if and only if $T=U^*(K_{\infty, {\rm reg}})^\half$;
\item $T$ is singular if and only if $K_\infty$ is singular.
\end{enumerate}
\end{theorem}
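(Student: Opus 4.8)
The plan is to exploit the fact that the $T_n$ here are closed operators, so that the regular part machinery of Section~\ref{app} applies cleanly, and to transfer the strong graph convergence of the sequence $K_n=T_n^*T_n$ guaranteed by Theorem~\ref{incr} into statements about $T$. First I would establish \eqref{un}. Since each $T_n$ is closed, $K_n=T_n^*T_n$ is nonnegative selfadjoint, and $T_n\prec_c T_m$ for $m\le n$ gives $K_n\le K_m$ by Theorem~\ref{nieuw0}; hence Theorem~\ref{incr} produces the limit $K_\infty$ with $\ran(K_\infty)^\half$ and $\|((K_n)^{-\half})_{\rm reg}\cdot\|$ as in \eqref{vvasf}--\eqref{vvasg}. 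On the other hand, from \eqref{vGrijpc} one has $T\prec_c T_n$, so $T^{**}\prec_c T_n$ (as $T_n$ is closed) and therefore $T^*T^{**}\le T_n^*T_n=K_n$ for every $n$, giving $T^*T^{**}\le K_\infty$. For the reverse I would use the description of $\dom(K_\infty)^\half$ (equivalently its range-side dual in \eqref{vvasf}) together with $\|T_n\varphi\|\searrow\|T\varphi\|$ from \eqref{vGrijpc}: comparing $\|(K_{n,{\rm reg}})^\half\varphi\|=\|(T_{n,{\rm reg}})^{**}\varphi\|\searrow$ with $\|T\varphi\|=\|(T^{**})_{\rm reg}\varphi\|$ for $\varphi\in\dom T$ pins down $(K_{\infty,{\rm reg}})^\half$ and $(T^{**})_{\rm reg}$ as having the same form, whence $K_\infty$ and $T^*T^{**}$ coincide as nonnegative selfadjoint relations. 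The identity \eqref{un} then yields \eqref{deux} immediately via Lemma~\ref{regs+}: $T^*T^{**}=(T_{\rm reg})^*(T_{\rm reg})^{**}$ and there is a partial isometry $U$ with $(K_{\infty,{\rm reg}})^\half=U(T_{\rm reg})^{**}$, so $(T_{\rm reg})^{**}=U^*(K_{\infty,{\rm reg}})^\half$.

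For the three equivalences I would argue as follows. For (c): $T$ is singular exactly when $T_{\rm reg}=0$, which by \eqref{deux} happens iff $(K_{\infty,{\rm reg}})^\half=0$, i.e. $K_{\infty,{\rm reg}}=0$, which is the statement that $K_\infty$ is singular. For (a): $T$ is closable iff $T=T_{\rm reg}$; since $T_{\rm reg}\subset(T_{\rm reg})^{**}=U^*(K_{\infty,{\rm reg}})^\half$ always holds, $T$ being closable forces $T\subset U^*(K_{\infty,{\rm reg}})^\half$. Conversely, $U^*(K_{\infty,{\rm reg}})^\half$ is a closable (indeed closed) operator, so if $T\subset U^*(K_{\infty,{\rm reg}})^\half$ then $T$ is closable. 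For (b): $T$ is closed iff $T=T^{**}=(T^{**})_{\rm reg}\hplus(\{0\}\times\mul T^{**})$, but a closed \emph{operator} has $\mul T^{**}=\{0\}$, so $T$ closed iff $T=(T_{\rm reg})^{**}=U^*(K_{\infty,{\rm reg}})^\half$. One should also check that $T$ closed implies $\mul T^{**}=\{0\}$ genuinely follows since $T$ is an operator — this is where closedness of all the $T_n$ (not merely closability) is essential, otherwise $\mul$ could appear.

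I expect the main obstacle to be the reverse inequality $K_\infty\le T^*T^{**}$, i.e. showing that no mass is lost in passing to the limit. The inequality $T^*T^{**}\le K_\infty$ is soft, coming only from $T\prec_c T_n$; the content is that the monotonicity-principle limit $K_\infty$ is \emph{exactly} $T^*T^{**}$ and not something strictly smaller. The cleanest route is to compare range-sides: show $\ran(T^*T^{**})^\half=\ran K_\infty^\half$ and that the relevant norms match, using \eqref{vvasf}--\eqref{vvasg} on one side and the explicit construction of $T$ through the representing map (with seminorm $\|\varphi\|_+=\lim_n\|T_n\varphi\|$) on the other. Concretely, for $\varphi\in\dom T$ one has $\|(T_{n,{\rm reg}})^{**}\varphi\|=\|T_n\varphi\|\searrow\|T\varphi\|$, which identifies $\varphi\in\ran$-condition for $K_\infty$ and simultaneously gives $\|(K_{\infty,{\rm reg}})^\half\varphi\|=\|T\varphi\|=\|(T^{**})_{\rm reg}\varphi\|$; since two nonnegative selfadjoint relations with equal regular-part square-root forms (on matching domains) are equal, this closes the gap. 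A subtlety to watch: $K_n=T_n^*T_n$ uses $T_n=T_n^{**}$ by closedness, so the regular part of $K_n$ is $(T_{n,{\rm reg}})^*(T_{n,{\rm reg}})^{**}$ and $\dom K_n^\half=\dom T_n$ — this is what makes \eqref{vvasf} line up with $\dom T=\bigcup_n\dom T_n$ from \eqref{vGrijpb}, and it should be spelled out.
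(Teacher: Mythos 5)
Your first half is sound and matches the paper: $T\prec_c T_n$ gives $T^{**}\prec_c T_n^{**}=T_n$, hence $T^*T^{**}\le K_n$ for all $n$ and so $T^*T^{**}\le K_\infty$; and once \eqref{un} is known, \eqref{deux} and items (a), (b) follow exactly as you say (your route to (c) via $T_{\rm reg}=0$ is slightly different from the paper's direct computation with $T^*=\sA\times\sB$, but it works). The problem is the reverse inequality $K_\infty\le T^*T^{**}$, where your argument rests on two identities that are false in general. You claim that for $\varphi\in\dom T$ the decreasing limit $\|T_n\varphi\|\searrow\|T\varphi\|$ ``simultaneously gives $\|(K_{\infty,{\rm reg}})^\half\varphi\|=\|T\varphi\|=\|(T^{**})_{\rm reg}\varphi\|$.'' Neither equality holds. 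For a \emph{nonincreasing} sequence the forms do not converge pointwise to the form of the limit relation: Theorem \ref{incr} only describes $\ran(K_\infty)^\half$ and the inverses in \eqref{vvasf}--\eqref{vvasg}; there is no analogue of \eqref{vasf}--\eqref{vasg} for $\dom(K_\infty)^\half$. Likewise $\|T\varphi\|=\|(T^{**})_{\rm reg}\varphi\|$ would force $T$ to be closable, which the limit of a nonincreasing sequence of closed operators need not be. The paper's own example (end of Section \ref{noninc}) kills both claims at once: there $Tf=f(0)e$ is singular, $K_\infty=\sH\times\{0\}$, so $\|(K_{\infty,{\rm reg}})^\half\varphi\|=\|(T^{**})_{\rm reg}\varphi\|=0$ while $\|T\varphi\|=|f(0)|\,\|e\|\neq 0$. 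So ``two relations with equal regular-part square-root forms are equal'' cannot be invoked, because the forms are not shown (and are not) equal on $\dom T$.

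What actually closes the gap is an inequality chain, not an equality chain. From $K_\infty\le K_n$ and Theorem \ref{nieuw0} one gets $(K_\infty)^\half\prec_c T_n$, hence $(K_{\infty,{\rm reg}})^\half\prec_c T_n$ for every $n$, i.e. $\dom T_n\subset\dom(K_{\infty,{\rm reg}})^\half$ and $\|(K_{\infty,{\rm reg}})^\half\varphi\|\le\|T_n\varphi\|$ on $\dom T_n$. Taking the union over $n$ and the infimum of the right-hand sides, and using \eqref{vGrijpb}--\eqref{vGrijpc}, this yields only $\|(K_{\infty,{\rm reg}})^\half\varphi\|\le\|T\varphi\|$ on $\dom T$, i.e. $(K_{\infty,{\rm reg}})^\half\prec_c T$. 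One then passes to closures and regular parts via Lemma \ref{regg} to obtain $(K_\infty)^\half\prec_c T^{**}$, and Theorem \ref{nieuw0} converts this into $K_\infty\le T^*T^{**}$. Combined with your first inequality this gives \eqref{un} by comparing resolvents. You should replace your ``matching forms'' step by this domination argument; as written, the central identity \eqref{un} is not established.
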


\begin{proof}
Let $T \in \bL(\sH, \sK)$ be the limit operator in  \eqref{vGrijpb} and \eqref{vGrijpc}.
Recall from \eqref{vGrijpc} that  $T \prec_c T_n$. This leads to
$T^{**} \prec_c (T_n)^{**}=T_n$, which gives $T^*T^{**} \leq T_n^*T_n=K_n$
by Theorem \ref{nieuw0}.
Since this holds for all $n \in \dN$ one obtains
\begin{equation}\label{een1}
 T^*T^{**} \leq K_\infty.
\end{equation}

Moreover, recall  from \eqref{vvasd1} that $K_\infty \leq K_n$, so that
$(K_\infty)^\half \prec_c T_n$ by Theorem \ref{nieuw0}.
In particular, it follows that $(K_{\infty, {\rm reg}})^\half \prec_c T_n$.
Hence one has
\[
 \dom T_n \subset \dom (K_{\infty, {\rm reg}})^\half
 \quad \mbox{and} \quad
 \|(K_{\infty, {\rm reg}})^\half \varphi \| \leq \|T_n \varphi\|, \quad  \varphi \in \dom T_n.
\]
Clearly, with \eqref{vGrijpb}  this now leads to
\[
 \dom T \subset \dom (K_{\infty, {\rm reg}})^\half
 \quad \mbox{and} \quad
 \|(K_{\infty, {\rm reg}})^\half \varphi \| \leq  \inf_{n \in \dN} \|T_n \varphi\|,
 \quad  \varphi \in \dom T.
\]
Thanks to \eqref{vGrijpc} this reads
\[
 \dom T \subset \dom (K_{\infty, {\rm reg}})^\half
 \quad \mbox{and} \quad
 \|(K_{\infty, {\rm reg}})^\half \varphi \| \leq \|T \varphi\|,
 \quad  \varphi \in \dom T,
\]
or equivalently, $(K_{\infty, {\rm reg}})^\half \prec_c T$.
Since closures and regular parts are preserved under the inequality,
this gives $(K_{\infty, {\rm reg}})^\half \prec_c (T^{**})_{\rm reg}$
or $(K_\infty)^\half \prec_c T^{**}$ by Lemma \ref{regg}.
Therefore, one obtains
\begin{equation}\label{twee1}
K_\infty \leq T^* T^{**}.
\end{equation}

\medskip

Combining the inequalities \eqref{een1} and \eqref{twee1} leads to the inequalities
\[
 T^* T^{**} \leq K_\infty \leq T^* T^{**},
\]
or, equivalently,
\[
 (T^*T^{**}-\lambda)^{-1} \leq (K_\infty-\lambda)^{-1} \leq (T^*T^{**}-\lambda)^{-1}, \quad \lambda<0.
\]
This shows  that \eqref{un} holds.
Next \eqref{deux} follows thanks to Lemma \ref{regs+}.

\medskip

Finally, the last assertions concerning the relationship between $T$ and $K_\infty$
will be discussed.

(a) If $T \subset U^*(K_{\infty, {\rm reg}})^\half$, then $T$ is closable.
Conversely, if $T$ is closable, then
$T=T_{\rm reg} \subset (T_{\rm reg})^{**}=U^* (K_{\infty, {\rm reg}})^\half$.

(b) If $T = U^*(K_{\infty, {\rm reg}})^\half$, then $T$ is closed.
Conversely, if $T$ is closed, then
$T=(T_{\rm reg})^{**} =U^* (K_{\infty, {\rm reg}})^\half$.

(c) If $T$ is singular, then $T^*=\sA \times \sB$
where $\sA$ and $\sB$ are closed linear subspaces of $\sK$ and $\sH$, respectively.
Hence $T^{**}=\sB^\perp \times \sA^\perp$, so that  $T^*T^{**}=\sB^\perp \times \sB$
and $K_\infty$ is singular.
Conversely, let $K_\infty=T^*T^{**}$ be singular. Then $T^*T^{**}=\sB^\perp \times \sB$
with a closed linear subspace $\sB$ in $\sH$. Hence it follows that
\[
\left\{
\begin{array}{l}
 \mul T^*=\mul T^*T^{**}=\sB, \\
 \ker T^{**}=\ker T^*T^{**}=\sB^\perp. %, \,\, \mbox{i.e.,} \,\, \cran T^*=\sB.
\end{array}
\right.
\]
Therefore $\cran T^{*}= (\ker T^{**})^\perp=\mul T^*$, i.e. $T^*$ and, hence, also $T$ is singular.
\end{proof}

In the present circumstances there is in general no preservation of closedness
in Theorem \ref{vGrijp1}.
This will be shown in the following example; it  is a simple adaptation of
\cite[Example 4.5]{BHSW2010} or \cite[p. 374]{RS1}.

\begin{example}%\label{exe}
Let $T_n \in \bL(\sH, \sH \oplus \dC)$ with $\sH=L^2(0,1)$
be given as a column operator by $T_n=\col(T_n^1, T_n^2)$
(see \cite{HS2023a}) with
the operators $T_n^1$ and $T_n^2$ given by
\[
 T_n^1 f=\frac{1}{\sqrt{n}}\, i Df, \quad f(1)=0, \quad \mbox{and} \quad T_n^2 f=f(0).
\]
Here $D$ stands for the maximal differentiation operator in $L^2(0,1)$.
Then $T_n^1$ is closed, $T_n^2$ is singular, while the column $T_n$ is closed.
It is clear that $T_n \prec_c T_m$, $m \leq n$, and the limit $T \in \bL(\sH)$ is given by $Tf=f(0) e$,
where the function $e \in \sH=L^2(0,1)$ is defined by $e(x)=1$.
Moreover, the corresponding nonnegative selfadjoint relation $K_n=T_n^*T_n$ is the operator
in $\sH=L^2(0,1)$ given by
\[
 K_n f =- \frac{1}{n} D^2 f, \quad f'(0)=n f(0), \,\, f(1)=0.
\]
The relations $K_n$ form a sequence that is nonincreasing with the nonnegative selfadjoint limit $K_\infty$
and, by Theorem \ref{vgrijp4}, one has
\[
 K_\infty=T^*T^{**}.
\]
Now observe that $T^*= (\span \{e\})^\perp \times \{0\}$ and $T^{**}=\sH \times \span \{e \}$,
so that $T$ is a singular operator and, in fact $T^*T^{**}=\sH \times \{0\}$.
Hence it follows that
\[
 K_\infty=T^*T^{**}=\sH \times \{0\}.
\]
\end{example}

\section{Appendix: On the products $T^*T$ and $T^*T^{**}$}\label{app}

This appendix contains a number of properties of the
relations $T^*T$ and $T^*T^{**}$ when $T \in \bL(\sH, \sK)$
is a linear relation.  The main emphasis is on the interplay with the regular parts
of these relations. For the convenience of the reader, the arguments are included.

\medskip

Let $T \in \bL(\sH,\sK)$, so that $T^* \in \bL(\sK,\sH)$ is a closed linear relation.
The product $T^*T \in \bL(\sH)$ is defined as
\begin{equation}\label{teetee}
 T^*T=\big\{ \{f,f'\} \in \sH \times \sH :\, \{f,h\} \in T, \, \{h, f'\} \in T^* \,\, \mbox{for some} \,\, h \in \sK \big\}.
\end{equation}
Hence, for the elements in the right-hand side of \eqref{teetee} it is clear that
\begin{equation}\label{teetee1}
 (f',f)=\| h\|^2.
\end{equation}
It follows immediately from \eqref{teetee} and \eqref{teetee1} that the relation $T^*T$
is nonnegative. Moreover, it also follows from \eqref{teetee} and \eqref{teetee1} that
\begin{equation}\label{mul}
 \mul T^*T=\mul T^*.
\end{equation}
It is clear from $T \subset T^{**}$ that  the nonnegative relation
$T^*T$ has a nonnegative  extension
$T^* T^{**}$.  Since $T^{**}$ is closed the product $T^*T^{**}$
is selfadjoint; cf. \cite[Lemma 1.5.8]{BHS}).
Moreover one sees that
\begin{equation}\label{teetee2}
T^*T   \subset T^*T^{**} \subset (T^*T)^*.
\end{equation}
In particular, it follows from \eqref{teetee2} that the closure of $T^*T$ satisfies
\begin{equation}\label{teetee3}
(T^*T)^{**}\subset T^*T^{**}.
\end{equation}
However, in general, even when $T$ is closable,
there is no equality in \eqref{teetee3}.

\medskip

Recall the  definition of the regular part $T_{\rm reg}$:
$T_{\rm reg}=(I-P)T$ where $P$ is the orthogonal projection from $\sK$ onto $\mul T^{**}$,
so that also  $(T^{**})_{\rm reg}=(I-P)T^{**}$. This gives
$(T_{\rm reg})^*=((T^{**})_{\rm reg})^*$, which by taking adjoints  leads to the formal identity
$(T_{\rm reg})^{**}=((T^{**})_{\rm reg})^{**}$.  Note that $(T^{**})_{\rm reg}$
is closed, so that $(T^{**})_{\rm reg}=(T_{\rm reg})^{**}$ in \eqref{vier} is clear.

\medskip

There is an interesting interplay between linear relations and their regular parts
when forming quadratic combinations. Let  $\{f,f'\} \in T^{**}$ and $\{g,g'\} \in T^{*}$,
then by definition there is the identity
\begin{equation}\label{vaag1}
(g',f)=(g,f').
\end{equation}
Recall that the orthogonal projection $P$ maps $\sK$ onto $\mul T^{**}= \cdom T^*$,
and let $Q$ be  the orthogonal projection from $\sH$ onto $\mul T^*=\cdom T^{**}$.
Therefore the identity \eqref{vaag1} reads
\begin{equation}\label{vaag1+}
  (g',(I-Q)f)=((I-P)g,f'),
\end{equation}
which can be rewritten in terms of the regular parts
\begin{equation}\label{heen3}
( (T^*)_{\rm reg} g,f)
=(g, (T_{\rm reg})^{**}f), \,\,\,f \in \dom T^{**}, \,\, g \in \dom T^{*},
\end{equation}
where the equality  \eqref{vier} has been used.
Likewise, there is the identity
\begin{equation}\label{heen33}
( (T^*)_{\rm reg} g,f)
=(g, T_{\rm reg} f), \,\,\,f \in \dom T, \,\, g \in \dom T^{*},
\end{equation}
which also follows from \eqref{vaag1} and \eqref{vaag1+}.
The following lemma shows the various interrelationships.

\begin{lemma}\label{regs}
Let $T \in \bL(\sH, \sK)$ be a linear relation.
Then
\begin{equation}\label{vaag2}
\left\{
\begin{array}{l}
 \big\{ \{\varphi, h \} \in T:\, h \in \dom T^* \big\} \subset T_{\rm reg}, \\
  \big\{ \{\varphi, h \} \in T^{**}:\, h \in \dom T^* \big\} \subset (T^{**})_{\rm reg}=(T_{\rm reg})^{**},
\end{array}
\right.
\end{equation}
and
\begin{equation}\label{zwei}
\left\{
\begin{array}{l}
  T^*T \subset T^* T_{\rm reg} = (T_{\rm reg})^*T_{\rm reg},  \\
  T^*T^{**} = T^* (T_{\rm reg})^{**}=(T_{\rm reg})^*(T_{\rm reg})^{**}.
\end{array}
 \right.
 \end{equation}
Moreover, the  multivalued parts  in \eqref{zwei} satisfy
\begin{equation}\label{zweim}
\mul T^*=\mul (T_{\rm reg})^*,
\end{equation}
and, consequently,
\begin{equation}\label{vierii}
\left\{
\begin{array}{l}
(T^*T)_{\rm reg} \subset (T^*)_{\rm reg}T_{\rm reg}=\big( (T_{\rm reg})^*T_{\rm reg} \big)_{\rm reg}, \\
 (T^*T^{**})_{\rm reg}= (T^*)_{\rm reg} (T_{\rm reg})^{**}=\big( (T_{\rm reg})^*(T_{\rm reg})^{**} \big)_{\rm reg}.
\end{array}
\right.
\end{equation}
In particular,
 \begin{equation}\label{dreii+}
 \left\{
\begin{array}{l}
  \big(\big( (T_{\rm reg})^* T_{\rm reg} \big)_{\rm reg} \varphi, \psi  \big)
   =(T_{\rm reg}  \varphi , T_{\rm reg} \psi), \\
  \hspace{4.8cm}  \varphi \in \dom T^*T_{\rm reg}, \,\, \psi \in \dom T,\\
 \big(\big( (T_{\rm reg})^*(T_{\rm reg})^{**} \big)_{\rm reg} \varphi, \psi  \big)
  =((T_{\rm reg})^{**} \varphi , (T_{\rm reg})^{**}\psi), \\
  \hspace{4.8cm}  \varphi \in \dom T^*T^{**}, \,\, \psi \in \dom T^{**}.
\end{array}
\right.
\end{equation}
\end{lemma}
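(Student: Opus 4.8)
The plan is to base everything on two geometric observations about the orthogonal projection $P$ from $\sK$ onto $\mul T^{**}$. First, $\dom T^*\subset\cdom T^*=(\mul T^{**})^\perp$, so $P$ annihilates the intermediate element of any product that runs through $T^*$. Second, $\ran T_{\rm reg}$ and $\ran (T_{\rm reg})^{**}$ lie in $(\mul T^{**})^\perp$, and moreover $T^*\subset(T_{\rm reg})^*$: for $\{g,g'\}\in T^*$ one has $Pg=0$, hence $(g,(I-P)\psi)=(g,\psi)=(g',\varphi)$ for all $\{\varphi,\psi\}\in T$. With these in hand I would first prove \eqref{vaag2}: if $\{\varphi,h\}\in T$ with $h\in\dom T^*$, then $Ph=0$, so $\{\varphi,h\}=\{\varphi,(I-P)h\}\in(I-P)T=T_{\rm reg}$; replacing $T$ by $T^{**}$ and using $(T^{**})_{\rm reg}=(I-P)T^{**}=(T_{\rm reg})^{**}$ from \eqref{vier} gives the second inclusion.

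Next I would derive \eqref{zwei}. An element $\{f,f'\}\in T^*T$ comes with an $h$ satisfying $\{f,h\}\in T$, $\{h,f'\}\in T^*$; since $h\in\dom T^*$, \eqref{vaag2} promotes $\{f,h\}\in T$ to $\{f,h\}\in T_{\rm reg}$, so $\{f,f'\}\in T^*T_{\rm reg}$, i.e.\ $T^*T\subset T^*T_{\rm reg}$. From $T^*\subset(T_{\rm reg})^*$ we get $T^*T_{\rm reg}\subset(T_{\rm reg})^*T_{\rm reg}$, while for the reverse the intermediate element $h$ of a product in $(T_{\rm reg})^*T_{\rm reg}$ lies in $\ran T_{\rm reg}\subset(\mul T^{**})^\perp$, so $\{h,f'\}\in(T_{\rm reg})^*$ forces $\{h,f'\}=\{(I-P)h,f'\}\in T^*$; hence $T^*T_{\rm reg}=(T_{\rm reg})^*T_{\rm reg}$. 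The second line runs the same way with $T^{**}$ and $(T_{\rm reg})^{**}$, the only extra point being $T^*(T_{\rm reg})^{**}\subset T^*T^{**}$, obtained by writing $\psi=(I-P)\psi+P\psi$ with $\{0,P\psi\}\in T^{**}$. Then \eqref{zweim} is immediate, since $\mul(T_{\rm reg})^*=(\dom T_{\rm reg})^\perp=(\dom T)^\perp=\mul T^*$.

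The heart of the argument, and the step I expect to be most delicate, is \eqref{vierii}: one must first check that all the regular parts occurring there are formed with one and the same projection $Q$, namely the projection of $\sH$ onto $\mul T^*$. For the closed relations $T^*$, $(T_{\rm reg})^*$, $T^*T^{**}$, $(T_{\rm reg})^*(T_{\rm reg})^{**}$ this follows from \eqref{mul}, \eqref{zweim} and \eqref{zwei}; for the nonnegative but generally non-closed relations $T^*T$ and $(T_{\rm reg})^*T_{\rm reg}=T^*T_{\rm reg}$ it follows by sandwiching: $T^*T\subset T^*T^{**}$ with $T^*T^{**}$ closed forces $(T^*T)^{**}\subset T^*T^{**}$ by \eqref{teetee3}, hence $\mul(T^*T)^{**}\subset\mul T^*T^{**}=\mul T^*$, while $\mul(T^*T)^{**}\supset\mul T^*T=\mul T^*$ by \eqref{mul}; similarly for $T^*T_{\rm reg}$. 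Once this is in place the regular part of each of these relations is just $(I-Q)$ applied componentwise, the inclusion $(T^*T)_{\rm reg}\subset(T^*)_{\rm reg}T_{\rm reg}$ follows by running the proof of $T^*T\subset T^*T_{\rm reg}$ with $(I-Q)$ inserted, and $(T^*)_{\rm reg}T_{\rm reg}=\big((T_{\rm reg})^*T_{\rm reg}\big)_{\rm reg}$ follows by unwinding the definition of the product together with $(T^*)_{\rm reg}=(I-Q)T^*$ and $(T_{\rm reg})^*T_{\rm reg}=T^*T_{\rm reg}$. The second line of \eqref{vierii} is identical with $T^{**}$, $(T_{\rm reg})^{**}$ in place of $T$, $T_{\rm reg}$; here one gets equality rather than inclusion precisely because $(T_{\rm reg})^*(T_{\rm reg})^{**}=T^*T^{**}$ by \eqref{zwei}, whereas on the first line only the inclusion $T^*T\subset(T_{\rm reg})^*T_{\rm reg}$ is available and an inclusion of relations need not pass to the regular parts.

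Finally, for \eqref{dreii+}, fix $\varphi\in\dom T^*T_{\rm reg}$; there is $f'$ with $\{\varphi,f'\}\in(T_{\rm reg})^*T_{\rm reg}=T^*T_{\rm reg}$, that is $\{T_{\rm reg}\varphi,f'\}\in T^*$, and $\big((T_{\rm reg})^*T_{\rm reg}\big)_{\rm reg}\varphi=(I-Q)f'$. Since $\mul T^*=(\dom T)^\perp$ and $\psi\in\dom T$ we have $Q\psi=0$, so $\big(\big( (T_{\rm reg})^*T_{\rm reg} \big)_{\rm reg}\varphi,\psi\big)=(f',\psi)$; pairing $\{T_{\rm reg}\varphi,f'\}\in T^*$ with any $\{\psi,\psi'\}\in T$ through the defining identity of the adjoint gives $(f',\psi)=(T_{\rm reg}\varphi,\psi')=(T_{\rm reg}\varphi,(I-P)\psi')=(T_{\rm reg}\varphi,T_{\rm reg}\psi)$, the middle step because $T_{\rm reg}\varphi\perp\mul T^{**}$ and $(I-P)\psi'=T_{\rm reg}\psi$. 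The second identity in \eqref{dreii+} is obtained verbatim with $T^{**}$ and $(T_{\rm reg})^{**}$ replacing $T$ and $T_{\rm reg}$, invoking \eqref{heen3} in place of \eqref{heen33}. The only genuine obstacle throughout is the projection bookkeeping in \eqref{vierii} described above; everything else is element-chasing through the definitions of products, adjoints and regular parts.
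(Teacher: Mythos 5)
Your proposal is correct and follows essentially the same route as the paper: the inclusions in \eqref{vaag2} via $\dom T^*\subset(\mul T^{**})^\perp$, the identification $T^*T_{\rm reg}=(T_{\rm reg})^*T_{\rm reg}$ via the projection $I-P$ (the paper writes this as $T^*(I-P)^2T=((I-P)T)^*(I-P)T$ where you chase elements, which is the same fact), and \eqref{dreii+} by the adjoint pairing that the paper has packaged as \eqref{heen3}--\eqref{heen33}. Your explicit verification that all regular parts in \eqref{vierii} are formed with the single projection $Q$ onto $\mul T^*$ (by sandwiching $\mul(T^*T)^{**}$ between $\mul T^*T$ and $\mul T^*T^{**}$) is a detail the paper compresses into one sentence, and it is a worthwhile addition rather than a deviation.
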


\begin{proof}
Due to $\dom T^* \subset  \cdom T^*= (\mul T^{**})^\perp$ and \eqref{vier}
one sees that \eqref{vaag2} holds.
Hence it is clear that $T^*T \subset T^*T_{\rm reg}$.
With the orthogonal projection $P$ from $\sK$ onto $\mul T^{**}$, one sees that
\[
  T^*(I-P) T =T^*(I-P)^2 T = ((I-P)T)^* (I-P)T,
\]
which completes the proof of the first part of \eqref{zwei}.
Furthermore, replacing $T$ by $T^{**}$ in the first part of \eqref{zwei}
leads with \eqref{vier} to the second part;
the original inclusion is now an identity since $T^*T^{**}$ is selfadjoint.
The identity \eqref{zweim} is a consequence of \eqref{zwei} due to \eqref{mul}.
The consequence in \eqref{vierii} is obtained from \eqref{zweim} together with \eqref{mul}.

It follows from \eqref{heen33} with
$f=\psi$ and $g=T_{\rm reg} \varphi$ that
\begin{equation*}
\begin{split}
( (T^*)_{\rm reg} T_{\rm reg} \varphi ,\psi)
=(T_{\rm reg} \varphi , T_{\rm reg} \psi), \quad
\varphi , \psi \in \dom T, \quad T_{\rm reg}  \varphi
\in \dom T^*.
\end{split}
\end{equation*}
Note that the conditions
$
\varphi \in \dom T$ and $T_{\rm reg} \varphi \in \dom T^*$
are equivalent to the condition $\varphi \in \dom T^*T_{\rm reg}$.
Thus, with \eqref{vierii}, the first assertion in \eqref{dreii+} has been shown.
Likewise, it follows from \eqref{heen3} with
$f=\psi$ and $g=(T_{\rm reg})^{**} \varphi$ that
\begin{equation*}
\begin{split}
&( (T^*)_{\rm reg}(T_{\rm reg})^{**} \varphi ,\psi)
=((T_{\rm reg})^{**} \varphi , (T_{\rm reg})^{**}\psi),\\
& \hspace{6cm}  \varphi , \psi \in \dom T^{**}, \quad (T_{\rm reg})^{**} \varphi
\in \dom T^*.
\end{split}
\end{equation*}
Note that the conditions
$\varphi \in \dom T^{**}$ and $(T_{\rm reg})^{**} \varphi \in \dom T^*$
are equivalent to the condition $\varphi \in \dom T^*T$, thanks to
\eqref{zwei}. Thus, with \eqref{vierii},
the second assertion in \eqref{dreii+} has been shown.
\end{proof}

There is a special, useful, case of Lemma \ref{regs} that deserves attention.
It is about the orthogonal operator part of $H=T^*T$ when $T$ is closed.

\begin{lemma}\label{regs+}
Let $T \in \bL(\sH, \sK)$ be a closed linear relation and let
$H \in \bL(\sH)$ be the nonnegative selfadjoint relation defined by $H=T^*T$.
Then
\begin{equation}\label{ugh}
 (H_{\rm reg} \varphi, \psi)=( T_{\rm reg} \varphi, T_{\rm reg} \psi),
 \quad \varphi \in \dom T^*T, \,\, \psi \in \dom T,
\end{equation}
and there exists a partial isometry $U \in \bB(\sK, \sH)$ such that
\[%begin{equation}\label{ugh1}
 (H_{\rm reg})^\half = U T_{\rm reg}.
\]%end{equation}
\end{lemma}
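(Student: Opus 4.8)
The plan is to read both assertions off from Lemma~\ref{regs}, exploiting that the hypothesis that $T$ is closed turns the relevant inclusions there into equalities. First I would note that for closed $T$ one has $T^{**}=T$ and that $T_{\rm reg}=(T^{**})_{\rm reg}$ is a closed operator, so $(T_{\rm reg})^{**}=T_{\rm reg}$; hence the second line of \eqref{zwei} becomes $T^*T=T^*T^{**}=(T_{\rm reg})^*(T_{\rm reg})^{**}=(T_{\rm reg})^*T_{\rm reg}$, so that $T^*T_{\rm reg}=(T_{\rm reg})^*T_{\rm reg}=T^*T$, in particular $\dom T^*T_{\rm reg}=\dom T^*T$, and by \eqref{vierii}
\[
H_{\rm reg}=(T^*T)_{\rm reg}=\big((T_{\rm reg})^*T_{\rm reg}\big)_{\rm reg}.
\]
Substituting these identities into the first line of \eqref{dreii+} gives precisely $(H_{\rm reg}\varphi,\psi)=(T_{\rm reg}\varphi,T_{\rm reg}\psi)$ for $\varphi\in\dom T^*T$, $\psi\in\dom T$, which is the first assertion. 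A self-contained alternative: for $\{f,f'\}\in T^*T$ choose $h$ as in \eqref{teetee}; since $h\in\dom T^*\subset(\mul T^{**})^\perp$ and $\{f,h\}\in T$ one gets $h=T_{\rm reg}f$, whence $\|(H_{\rm reg})^\half f\|^2=(f',f)=\|h\|^2=\|T_{\rm reg}f\|^2$ by \eqref{teetee1} and \eqref{sq2}, and polarization together with the core property used below recovers the form identity for $\psi\in\dom T$.

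For the partial isometry I would put $\psi=\varphi$ in the form identity to obtain
\[
\|(H_{\rm reg})^\half\varphi\|=\|T_{\rm reg}\varphi\|,\qquad \varphi\in\dom T^*T,
\]
so that $U_0\colon T_{\rm reg}\varphi\mapsto(H_{\rm reg})^\half\varphi$ is a well-defined linear isometry from $T_{\rm reg}(\dom T^*T)$ onto $(H_{\rm reg})^\half(\dom T^*T)$. The next step is to invoke that $\dom T^*T$ is a core for the closed relation $T$, hence for the closed operator $T_{\rm reg}$, and that --- since $\dom T^*T=\dom H=\dom H_{\rm reg}$ --- it is likewise a core for $(H_{\rm reg})^\half$ (the standard fact that $\dom A$ is a core for $A^\half$ for a nonnegative selfadjoint relation $A$). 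These core properties make $T_{\rm reg}(\dom T^*T)$ dense in $\cran T_{\rm reg}$ and $(H_{\rm reg})^\half(\dom T^*T)$ dense in $\cran(H_{\rm reg})^\half$, so $U_0$ extends continuously to an isometry of $\cran T_{\rm reg}$ onto $\cran(H_{\rm reg})^\half$; setting $U=0$ on $(\cran T_{\rm reg})^\perp$ yields a partial isometry $U\in\bB(\sK,\sH)$. Finally, approximating an arbitrary $\varphi\in\dom T_{\rm reg}=\dom T$ by a sequence in $\dom T^*T$ in the graph norm of $T$ and using closedness of $(H_{\rm reg})^\half$, together with the injectivity of $U$ on $\ran T_{\rm reg}$, gives $(H_{\rm reg})^\half=UT_{\rm reg}$ (and, as a byproduct, $\dom(H_{\rm reg})^\half=\dom T$).

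The main obstacle is the passage from the identity on $\dom T^*T$ to the identity $(H_{\rm reg})^\half=UT_{\rm reg}$ on all of $\dom T$: one must know that $\dom T^*T$ is large enough, i.e.\ a core for $T$, which is where closedness of $T$ is genuinely used. This is a von Neumann-type statement that remains valid for closed relations that are not densely defined; I would cite it from \cite[Lemma~1.5.8]{BHS} (or prove it by the standard argument), after which the remaining manipulations with ranges, closures and graph limits are routine.
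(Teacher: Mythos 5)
Your proposal is correct and follows essentially the same route as the paper: the identity \eqref{ugh} is read off from Lemma~\ref{regs} (you usefully spell out how closedness turns the inclusions in \eqref{zwei} and \eqref{vierii} into equalities), the norm identity $\|(H_{\rm reg})^\half\varphi\|=\|T_{\rm reg}\varphi\|$ on $\dom T^*T$ follows, and the partial isometry is obtained by extension using that $\dom T^*T=\dom H$ is a common core for $T_{\rm reg}$ and $(H_{\rm reg})^\half$. The paper compresses the final extension step into one sentence with a citation, so your more explicit construction of $U$ and the closedness argument for $\dom(H_{\rm reg})^\half=\dom T$ is just a fleshed-out version of the same proof.
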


\begin{proof}
Recall that $H=T^*T \in \bL(\sH)$ is nonnegative and selfadjoint and that
$\mul H=\mul T^*$. It follows from Lemma \ref{regs} that the identity
\eqref{ugh} is satisfied. Therefore
\[
  \| (H_{\rm reg})^\half \varphi\|=\|T_{\rm reg} \varphi \|,
  \quad \varphi \in \dom H_{\rm reg}=\dom H=\dom T^*T=\dom (T_{\rm reg})^*T_{\rm reg}.
\]
It is clear that $\dom H_{\rm reg}=\dom (T_{\rm reg})^*T_{\rm reg}$ is a core for $(H_{\rm reg})^\half$
and for $T_{\rm reg}$; cf. \cite[Lemma 1.5.10]{BHS}. Hence the assertion follows.
\end{proof}


\begin{thebibliography}{33}

\bibitem{AtE1}
W. Arendt and T. ter Elst,
``Sectorial forms and degenerate differential operators'',
J. Operator Theory, 67 (2012), 33--72.

\bibitem{BHS}
J.~Behrndt, S.~Hassi, and H.S.V.~de~Snoo,
 \textit{Boundary value problems, Weyl functions, and differential operators},
 Monographs in Mathematics, Vol. 108, Birkh\"auser, 2020.

\bibitem{BHSW2010}
J.~Behrndt, S.~Hassi, H.S.V.~de~Snoo, and H.L. Wietsma,
``Monotone convergence theorems for semi-bounded operators and forms with applications'',
Proc. Royal Society of Edinburgh Section A: Mathematics, 140 (2010),  927 -- 951.


\bibitem{HSS2018}
S.~Hassi, Z.~Sebesty\'en, and H.S.V. de~Snoo,
``Lebesgue type decompositions for linear relations and Ando's uniqueness criterion",
Acta Sci. Math. (Szeged), 84 (2018), 465--507.

\bibitem{HSSS2007}
S.~Hassi, Z.~Sebesty\'en, H.S.V.~de~Snoo, and F.H.~Szafraniec,
``A canonical decomposition for linear operators and linear relations'',
Acta Math. Hungarica, 115 (2007), 281--307.

\bibitem{HS2015}
S. Hassi and H. S.V. de Snoo,
``Factorization, majorization, and domination for linear relations'',
Annales Univ. Sci. Budapest, 58 (2015), 53--70.

\bibitem{HS2022}
S.~Hassi and H.S.V. de~Snoo,
``Lebesgue type decompositions and Radon–Nikodym derivatives for pairs of bounded linear operators'',
Acta Sci. Math. (Szeged),  88 (2022), 469--503.

\bibitem{HS2023a}
S.~Hassi and H.S.V. de~Snoo,
``Complementation and Lebesgue type decompositions
of linear operators and relations'', submitted for publication.

\bibitem{HS2023b}
S.~Hassi and H.S.V. de~Snoo,
``Representing maps for semibounded forms and their Lebesgue type decompositions'',
submitted for publication.

\bibitem{HSSz2009}
S.~Hassi, H.S.V. de~Snoo, and F.H. Szafraniec,
``Componentwise and Cartesian decompositions of linear relations'',
Dissertationes Mathematicae 465 (2009).

\bibitem{Kato}
T.~Kato, \textit{Perturbation theory for linear operators},
Springer-Verlag, Berlin, 1980.

\bibitem{RS1}
M. Reed and B. Simon.
\textit{Methods of modern physics. I.}
Academic Press, New York, 1980.

\bibitem{S3}
B.~Simon,
``A canonical decomposition for quadratic forms with applications
to monotone convergence theorems'',
J. Functional Analysis, 28 (1978), 377--385.

\bibitem{Szy87}
W. Szyma\'nski,
``Positive forms and dilations",
Trans. Amer. Math. Soc., 301 (1987),  761--780.

\end{thebibliography}
\end{document}